\newtheorem{theorem}                   {Theorem} 
\newtheorem{thm}             [theorem] {Theorem} 
\newtheorem{lemma}           [theorem] {Lemma}  
\newtheorem{corollary}       [theorem] {Corollary}   
\newtheorem{proposition}     [theorem] {Proposition}  
\newtheorem{claim}           [theorem] {Claim}
\newtheorem{conjecture}      [theorem] {Conjecture}
\newtheorem{definition}      [theorem] {Definition}
\theoremstyle{remark}
\newtheorem{remark}          [theorem] {Remark}
\newcommand{\eps}{\varepsilon}
\newcommand{\cH}{\mathcal{H}}
\newcommand{\cC}{\mathcal{C}}
\newcommand{\cJ}{\mathcal{J}}
\newcommand{\NN}{\mathbb{N}}
\newcommand{\RR}{\mathbb{R}}
\newcommand{\EE}{\mathbb{E}}
\newcommand{\PP}{\mathbb{P}}
\newcommand{\Bi}{\mathrm{Bi}}
\newcommand{\cond}{\; \middle\vert \;}
\newcommand{\macrocolour}{\color{red}}
\newcommand{\lbr}{{\macrocolour \mathcal{T}_*}}                  
\newcommand{\ubr}{{\macrocolour \mathcal{T}^*}}               
\newcommand{\br}{{\macrocolour \mathcal{T}}}
\newcommand{\ubfs}{{\macrocolour \mathrm{BFS}}}            
\newcommand{\elbr}{{\macrocolour \eps_*}}                           
\newcommand{\compone}{{\macrocolour C_{J_1}}}
\newcommand{\deggen}[2]{{\macrocolour d_{#1}(\gen{#2})}}
\newcommand{\stoptime}{{\macrocolour i_1}}
\newcommand{\expprob}{{\macrocolour \exp(-\Theta(n^{\delta/2}))}}
\newcommand{\cconst}{{\macrocolour c_0}}					
\newcommand{\cconstl}[1]{{\macrocolour c_{#1}}}
\newcommand{\gensize}[1]{{\macrocolour |\partial({#1})|}}                      
\newcommand{\gen}[1]{{\macrocolour \partial({#1})}}
\newcommand{\lcompconst}{{\macrocolour \lambda}}                      
\newcommand{\death}{{\macrocolour \mathcal{D}}}
\newcommand{\primaryStop}{{\macrocolour \mathcal{E}}}
\newcommand{\contrJumps}[2]{\macrocolour d^{(\text{jp})}_{#1}(\gen{#2})}				
\newcommand{\contrPivots}[2]{\macrocolour d^{(\text{pv})}_{#1}(\gen{#2})}				
\newcommand{\brpiv}{\macrocolour \mathcal{T}^{(\text{pv})}}          
\newcommand{\largetime}{{\macrocolour i_2}}
\renewcommand{\macrocolour}{\color{black}}
\begin{document}

\title[Size of the giant component: a short proof]{The size of the giant component in random hypergraphs: a short proof}
\thanks{The authors are supported by Austrian Science Fund (FWF): P26826 and W1230, the third author is also supported by EPSRC Grant No. EP/N004833/1.}

\author[O.~Cooley, M.~Kang]{Oliver Cooley and Mihyun Kang}
\email{\{cooley,kang\}@math.tugraz.at}
\email{christoph.koch@stats.ox.ac.uk}
\address{Institute of Discrete Mathematics\\ Graz University of Technology\\ Steyrergasse 30, 8010 Graz, Austria}
\author[C.~Koch]{Christoph Koch}
\address{Department of Statistics\\University of Oxford\\ 24--29 St.\ Giles, Oxford OX1 3LB, UK}

\date{\today}

\begin{abstract}
We consider connected components in $k$-uniform hypergraphs for the following notion of connectedness: given integers $k\ge 2$ and $1\le j \le k-1$, two $j$-sets (of vertices) lie in the same $j$-component
if there is a sequence of edges from one to the other such that consecutive edges intersect in at least $j$ vertices.

We prove that certain collections of $j$-sets constructed during a breadth-first search process on
$j$-components in a random $k$-uniform hypergraph are reasonably regularly distributed with high probability.
We use this property to provide a short proof of the asymptotic size of the giant $j$-component shortly after it appears.
\end{abstract}

\maketitle
\noindent Keywords: \emph{giant component, phase transition, random hypergraphs, high-order connectedness, degree, branching process}\\
Mathematics Subject Classification: 05C65, 05C80

\section{Introduction and main results}

One of the most well-known results in the field of random graphs is the \emph{phase transition} for the emergence of the giant component. Originally observed by Erd\H{o}s and R\'enyi and strengthened by various researchers over the years, we may state the result as follows. We say that an event holds \emph{with high probability}, often abbreviated to whp, if its probability tends to $1$ as $n$ tends to infinity.

\begin{theorem}[\cite{Bollobas84,ErdRen60,JKLP93,Luczak90}]\label{thm:graphcase}
Let $\eps = \eps(n)>0$ satisfy $\eps \rightarrow 0$ and $\eps^3 n \rightarrow \infty$. 
\begin{enumerate}[(a)]
\item If $p=\frac{1-\eps}{n}$, then whp all components of $G(n,p)$ have $O(\eps^{-2}\log (\eps^3n))$ vertices.
\item If $p=\frac{1+\eps}{n}$, then whp the largest component of $G(n,p)$ has size $(1\pm o(1))2\eps n$, while all other components have $O(\eps^{-2}\log (\eps^3n))$ vertices.
\end{enumerate}
\end{theorem}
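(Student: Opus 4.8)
The plan is to explore the components of $G(n,p)$ by breadth-first search and compare with a Galton--Watson branching process. When the component $C(v)$ of a fixed vertex $v$ is revealed by breadth-first search, at each step the number of newly discovered neighbours lies stochastically between $\Bi(n-|C(v)|,p)$ and $\Bi(n,p)$, so $|C(v)|$ is sandwiched between the total progenies of the Galton--Watson trees with these offspring laws. Since $np=1-\eps$ in~(a) this comparison process is subcritical, while $np=1+\eps$ in~(b) makes it supercritical, with survival probability $\rho$ solving $1-\rho=(1-p(1-\rho))^{n}$, so that $\rho=(1+o(1))2\eps$ as $\eps\to0$. The remaining ingredients are a sharp tail estimate for the total progeny $|\mathcal{T}|$ of a near-critical Galton--Watson tree -- obtained from the hitting-time identity $\PP(|\mathcal{T}|=t)=\tfrac1t\PP(\Bi(nt,p)=t-1)$ together with a local limit estimate for the binomial point probabilities -- and, for~(b), a fluid-limit analysis of the exploration walk.

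For part~(a), the sandwiching and the hitting-time identity give, for a universal constant $C$ and all $t$ in the relevant range,
\[
  \PP(|C(v)|\ge t)\ \le\ \frac{C}{\eps^{2}t^{3/2}}\,e^{-\eps^{2}t/2},
\]
because $t-1$ lies about $\eps t$ above the mean $(1-\eps)t$ of $\Bi(nt,p)$, so a Chernoff bound supplies the exponential factor and Stirling's formula the polynomial prefactor. Fix $K>2$, set $L=K\eps^{-2}\log(\eps^{3}n)$, and let $N_{\ge L}$ be the number of vertices in components of order at least $L$. Since $N_{\ge L}\ge L$ whenever it is positive, Markov's inequality yields
\[
  \PP\bigl(\exists\text{ component of order }\ge L\bigr)\ \le\ \frac{\EE[N_{\ge L}]}{L}\ =\ \frac{n}{L}\,\PP(|C(v)|\ge L),
\]
and inserting the tail bound this tends to $0$, using only $\eps\to0$ and $\eps^{3}n\to\infty$. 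The polynomial prefactor $\eps^{-2}t^{-3/2}$ is precisely what yields the sharp logarithm $\log(\eps^{3}n)$: the bare exponential bound $e^{-\eps^{2}t/2}$, combined with a union bound over the $n$ possible starting vertices, only gives the weaker $O(\eps^{-2}\log n)$.

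For part~(b), run the exploration on the whole graph, tracking the number $S_{t}$ of active vertices after $t$ vertices have been processed, so that the component orders are the excursion lengths of $S$ between consecutive zeros. The increments of $S$ are $d_{t}-1$ with $d_{t}\sim\Bi(n-D_{t-1},p)$ and $D_{t-1}\approx t+S_{t-1}$; they have conditional mean $\approx\eps-(t+S_{t-1})/n$ and conditional variance $\approx1$, so a Freedman/Bernstein-type martingale inequality pins $S_{t}$, at any fixed $t$, to $n f(t/n)$, where $f'(x)=\eps-x-f(x)$ and $f(0)=0$; the resulting $O(\sqrt{\eps n})$ deviation of $S_{t}$ translates into an $O(\sqrt{n/\eps})=o(\eps n)$ deviation of the excursion lengths, and this is exactly where the hypothesis $\eps^{3}n\to\infty$ enters. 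Since $f(x)=1+\eps-x-(1+\eps)e^{-x}$ is positive on $(0,\theta^{*})$ with unique positive root $\theta^{*}=(1+o(1))2\eps$, the walk stays positive throughout the macroscopic range, so there is a single long excursion, ending at time $(1+o(1))\theta^{*}n$ and corresponding to a component of order $(1+o(1))2\eps n$; no other component can reach order $\Omega(\eps n)$, and beyond time $\theta^{*}n$ the drift $\eps-x-f<0$ makes the exploration subcritical.

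It remains to bound all components other than the giant by $O(\eps^{-2}\log(\eps^{3}n))$. A ``medium'' component -- of order between $L=K\eps^{-2}\log(\eps^{3}n)$ and $\eps n/\log(\eps^{3}n)$, say -- would require the exploration to survive $L$ steps (probability $O(\rho)=O(\eps)$, after which the active set has order $\Omega(\eps L)$) and then to die before linear size, which forces the walk to descend by $\Omega(\eps L)$ against a drift of at least $\eps/2$, an event of probability $(\eps^{3}n)^{-\Omega(K)}$ by a gambler's-ruin estimate; hence the number of vertices in medium components has expectation $O(\eps n)\cdot(\eps^{3}n)^{-\Omega(K)}$, and a Markov bound (this number being at least $L$ when positive) makes it vanish for $K$ large. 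The main obstacle throughout is securing the sharp constants uniformly over the window $\eps^{3}n\to\infty$: in~(a) it is the $t^{-3/2}$ refinement of the subcritical total-progeny tail, which is what upgrades $\log n$ to $\log(\eps^{3}n)$; in~(b) it is the concentration of $|C_{\max}|$ on scale $o(\eps n)$, where a crude bounded-differences argument is useless and one must exploit the small conditional variance of the exploration increments -- the precise place where $\eps^{3}n\to\infty$ is needed.
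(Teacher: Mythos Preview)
The paper does not actually prove this theorem: it is stated as a classical result with citations to \cite{Bollobas84,ErdRen60,JKLP93,Luczak90}, and no proof is given. The paper's own contribution is Theorem~\ref{thm:main}, whose $k=2$, $j=1$ specialisation recovers only a \emph{weaker} form of Theorem~\ref{thm:graphcase}: in part~(a) the bound is $O(\eps^{-2}\log n)$ rather than $O(\eps^{-2}\log(\eps^3 n))$, and in part~(b) the non-giant components are only shown to be $o(\eps n)$ rather than $O(\eps^{-2}\log(\eps^3 n))$. So there is no ``paper's own proof'' to compare against in the strict sense.

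That said, your sketch is a reasonable outline of a standard proof, and it is worth contrasting it with the method the paper uses for its generalisation (which it attributes to Bollob\'as--Riordan~\cite{BR12b}). For part~(b) you run a single global exploration, track the walk $S_t$, and read off the giant as the unique long excursion via a fluid-limit/martingale-concentration argument. The paper instead estimates $\PP(|C(v)|\text{ large})$ by sandwiching between branching processes, uses the second moment method to concentrate the number of vertices in large components around $2\eps n$, and then applies a sprinkling argument to merge all large components into one. Your trajectory approach gives the giant and the bound on the second-largest component in one stroke and makes the role of $\eps^3 n\to\infty$ transparent as a fluctuation scale; the branching-process/second-moment/sprinkling route is more modular and is what generalises cleanly to the hypergraph setting of Theorem~\ref{thm:main}. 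For part~(a) you correctly identify that the sharp $\log(\eps^3 n)$ (as opposed to $\log n$) requires the $t^{-3/2}$ prefactor from the Otter--Dwass hitting-time identity and a local limit estimate; the paper's argument for Theorem~\ref{thm:main}(a) does not pursue this refinement, which is why it only obtains $O(\eps^{-2}\log n)$.
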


Our focus in this paper is an extension of this result to $k$-uniform hypergraphs\footnote{A $k$-uniform hypergraph is an ordered pair formed by a vertex set and an edge set, where each edge contains precisely $k$ distinct vertices.}, for which we need to know what we mean by a connected component in a hypergraph.

Given integers $k\ge 2$ and $1\le j \le k-1$ and a $k$-uniform hypergraph $\cH$, we say that two distinct $j$-sets (of vertices) $J_1,J_2$ are $j$-connected if there is a sequence of edges $E_1,\ldots,E_m$ in $\cH$ such that:
\begin{enumerate}
\item $J_1\subset E_1$ and $J_2\subset E_m$;
\item for each $1\le i\le m-1$, $|E_i\cap E_{i+1}|\ge j$.
\end{enumerate}
In other words, we can ``walk'' from $J_1$ to $J_2$ using edges which consecutively intersect
in at least $j$ vertices. Additionally we say that a $j$-set is $j$-connected to itself.
Then $j$-connectedness is an equivalence relation, and a \emph{$j$-connected component}
(or simply \emph{$j$-component}) is an equivalence class of this relation.
(Equivalently, a $j$-component is a maximal set of pairwise $j$-connected $j$-sets.)
The \emph{size} of a $j$-component is the number of $j$-sets it contains.

This provides a whole family of definitions for connectedness. The case $j=1$, also known as \emph{vertex-connectedness}, is by far the
most studied, but larger $j$, which we refer to as \emph{high-order connectedness}, provides new and richer challenges.

Given integers $n,k,j$ and a real number $p\in [0,1]$, let $\cH^k(n,p)$ denote the random $k$-uniform hypergraph with vertex set $[n]:=\{1,\dots,n\}$ in which each $k$-tuple of vertices forms an edge with probability $p$ independently of one another. Furthermore, we define
$$
\hat{p}_{\mathrm{g}}=\hat{p}_{\mathrm{g}}(n,k,j) := \tfrac{1}{\binom{k}{j}-1}\tfrac{1}{\binom{n}{k-j}}.
$$
The following result is a generalisation of the main result in~\cite{CKKgiant}.  

\begin{thm}\label{thm:main}
Let $1 \le j \le k-1$ and let $\eps=\eps(n)>0$ satisfy $\eps\rightarrow 0$, and $\eps^3 n^j\rightarrow \infty$ 
and $\eps^2 n^{1-\delta} \rightarrow \infty$ 
 for some constant $\delta >0$.
\begin{enumerate}[(a)]
\item \label{thm:main:subcrit} If $p=(1-\eps)\hat{p}_{\mathrm{g}}$, then whp all $j$-components of $\cH^k(n,p)$ have size at most $O(\eps^{-2}\log n)$.
\item \label{thm:main:supercrit} If $p=(1+\eps)\hat{p}_{\mathrm{g}}$, then whp the size of the largest $j$-component of $\cH^k(n,p)$ is $(1\pm o(1))\frac{2\eps}{\binom{k}{j}-1}\binom{n}{j}$, while all other $j$-components have size at most $o(\eps n^j)$.
\end{enumerate}
\end{thm}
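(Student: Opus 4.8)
The plan is to analyse a breadth-first search (BFS) exploration process on $j$-sets, exactly mirroring the classical proof of Theorem~\ref{thm:graphcase} but carried out at the level of $j$-sets rather than vertices. Concretely, starting from a fixed $j$-set $J$, one reveals generation by generation the $j$-sets that become $j$-connected to $J$: when a $j$-set is ``queried'', one inspects all potential edges containing it and, for each such edge $E$, all the $\binom{k}{j}-1$ other $j$-subsets of $E$ get added to the next generation (if not already discovered). The heuristic is that each queried $j$-set spawns in expectation roughly $(\binom{k}{j}-1)\binom{n-j}{k-j}p = (1\pm\eps)(1+o(1))$ new $j$-sets, so the exploration behaves like a Galton--Watson branching process with offspring mean $1\pm\eps$. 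In the subcritical regime $p=(1-\eps)\hat p_{\mathrm g}$ this dies out quickly, giving component size $O(\eps^{-2}\log n)$; in the supercritical regime the survival probability of the branching process is $(1+o(1))2\eps/(\binom{k}{j}-1)$, so a $(1+o(1))2\eps/(\binom{k}{j}-1)$ fraction of the $\binom{n}{j}$ $j$-sets lie in large components, and a sprinkling/second-moment argument shows these large components in fact coalesce into a single giant of the claimed size.

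The key steps, in order, are: (i) set up the BFS process on $j$-sets precisely, tracking the number of discovered, queried and active $j$-sets, and control the ``collision'' error — the discrepancy between the true number of newly discovered $j$-sets and the idealised binomial count — which arises from edges that reuse already-discovered $j$-sets or vertices; (ii) couple the early stages of the exploration with a Galton--Watson branching process (or directly with a random walk with i.i.d.-like increments) to obtain upper and lower bounds on component sizes, using standard concentration (Chernoff-type bounds) on the number of edges examined; (iii) in the subcritical case, show the walk hits zero within $O(\eps^{-2}\log n)$ steps whp, uniformly over all starting $j$-sets, via a union bound over the $\binom{n}{j}$ possible seeds; (iv) in the supercritical case, establish the dichotomy that each exploration either dies within $O(\eps^{-2}\log n)$ steps or reaches $\Omega(\eps n^j)$ active $j$-sets, and compute that the latter happens with probability $(1+o(1))2\eps/(\binom{k}{j}-1)$; (v) estimate the expected number of $j$-sets in large components and apply a second-moment concentration to pin down the count; (vi) run a sprinkling argument (expose an independent copy of $\cH^k(n,p')$ with tiny $p'$) to merge all large components into one, and bound the second-largest component by $o(\eps n^j)$. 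This is precisely the point where the regularity-of-distribution property announced in the abstract enters: to control collisions and to make the sprinkling work, one needs that the $j$-sets discovered by the BFS (in particular the ``boundary'' generations) do not concentrate on few vertices, i.e.\ that they are spread out reasonably regularly — this is the property the authors prove and then invoke here.

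The main obstacle, and the reason the hypergraph case is genuinely harder than the graph case ($j=1$, $k=2$), is controlling the collision error in step~(i) when $j\ge 2$. When a $j$-set is queried and an edge $E$ is found, the other $\binom{k}{j}-1$ $j$-subsets of $E$ are highly correlated — they share vertices with each other and with $J$ — so one cannot treat newly discovered $j$-sets as independent, and the naive count overestimates growth. Worse, a single new vertex set can be reached via many different edges and many different already-active $j$-sets, so bounding the total overcount requires knowing that the vertex-degrees within the explored structure stay small, which in turn needs the regular-distribution property. Handling this rigorously — showing that the accumulated collision error over $\Theta(\eps^{-2}\log n)$ or $\Theta(\eps n^j)$ steps is $o(\eps)$ times the main term, whp and uniformly over seeds — is the technical heart of the argument; everything downstream (branching-process coupling, survival probability, sprinkling) is then a relatively routine adaptation of the classical proof. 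I would therefore structure the write-up so that the collision bound is isolated as a self-contained lemma (relying on the regularity result from the excerpt), after which parts~\eqref{thm:main:subcrit} and~\eqref{thm:main:supercrit} follow by the standard random-walk and sprinkling machinery.
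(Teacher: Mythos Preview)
Your proposal is correct and follows essentially the same approach as the paper: BFS exploration on $j$-sets, coupling with a Galton--Watson process of offspring mean $1\pm\eps$, survival probability $(1+o(1))\tfrac{2\eps}{\binom{k}{j}-1}$, then second moment plus sprinkling, with the regularity-of-distribution result (the paper's Theorem~\ref{thm:bddgendeg} and its Corollary~\ref{cor:bddstopgendeg}) as the key technical input. One small sharpening: in the paper the generation-level regularity bound is invoked not for the sprinkling step itself but specifically in the second-moment calculation, to rule out the ``false negative'' event that the second exploration from $J_2$ looks small only because edges through the deleted boundary $\gen{\stoptime}$ of $C_{J_1}$ were suppressed; the cruder collision control needed for the branching-process coupling is handled by an easier component-wide degree bound (Lemma~\ref{lemma:bdddeg}).
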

Note that for $j=1$ the second condition on $\eps$ becomes $\eps \gg n^{-1/3}$, which is best possible,
while for larger $j$ the condition $\eps \gg n^{-\frac{1-\delta}{2}}$, which is probably not best possible,
takes over. We discuss the critical window in more detail in Section~\ref{sec:concluding}.

A weaker version of this result appeared in~\cite{CKKgiant} as Theorem~2,
where the assumption that $\eps^3n^j,\eps^2n^{1-\delta}\to\infty$ was replaced by the
stronger condition $\eps^3n^{1-2\delta}\to\infty$, for some constant $\delta>0$.
The case $k=2$ and $j=1$ is simply Theorem~\ref{thm:graphcase}.
The case $j=1$ for any $k\ge 2$ was proved by Schmidt-Pruzan and Shamir~\cite{SPS85}.

The proof of Theorem~\ref{thm:main} in \cite{CKKgiant} was based on a short proof of Theorem~\ref{thm:graphcase}
due to Bollob\'as and Riordan~\cite{BR12b}. The idea is to study an exploration process modelling the growth of components and analyse this process based on a branching process approximation. In the setting of hypergraphs substantial challenges arise when adapting this agenda. In order to overcome these obstacles, the proof of Theorem~\ref{thm:main} in~\cite{CKKgiant} required
significant and lengthy technical details.
In this paper we show how much of the technical detail can be avoided and thereby a slightly stronger
result can be obtained. 

Throughout the paper we fix integers $k\ge 2$ and $1\le j \le k-1$.

The main contribution of~\cite{CKKgiant} to the proof of Theorem~\ref{thm:main} was a result stating that
certain collections of $j$-sets are \emph{smooth} in the sense that for any
$0\le \ell \le j-1$, no $\ell$-set is contained in ``too many'' $j$-sets of the collection
(see Corollary~\ref{cor:bddstopgendeg}).
Indeed, Corollary~\ref{cor:bddstopgendeg} followed immediately from a far stronger result in~\cite{CKKgiant}. In this paper we show that we can significantly shorten the proof of Theorem~\ref{thm:main} by avoiding this stronger result and proving Corollary~\ref{cor:bddstopgendeg} more directly.

Let us fix a $j$-set $J_1$ and explore the $j$-component containing $J_1$ via a breadth-first search process $\ubfs=\ubfs(J_1)$.
More precisely, given (arbitrary) orderings $\sigma_j$ and $\sigma_k$ of the $j$-sets and $k$-sets respectively,
in $\ubfs$ we start with $J_1$ being active and all other $j$-sets being neutral.
The first generation consists only of the $j$-set $J_1$.
We produce generation $i+1$ from generation $i$ in the following way. For each $j$-set $J$ of generation $i$ in order according to $\sigma_j$, we query all previously unqueried $k$-sets containing $J$, in order according to $\sigma_k$. If such a $k$-set forms an edge, any neutral $j$-sets within it are added to generation $i+1$ and are called \emph{discovered}.

We denote the $i$-th generation of this process by $\gen{i}$. Note that $\gen{i}$ is a set of $j$-sets, which we may also view as a $j$-uniform hypergraph. Thus for $1\le \ell \le j-1$ and an $\ell$-set $L$, we may define the \emph{degree of $L$ in $\gen{i}$}, denoted $d_L(\gen{i})$, to be the number of $j$-sets of $\gen{i}$ that contain $L$. Our goal is to prove that the degrees $d_L(\gen{i})$ behave ``nicely''.

Next, we fix a constant $\delta$ satisfying $0 < \delta < 1/6$,
and think of it as an arbitrarily small constant -- in general our results
become stronger for smaller $\delta$ (the bounds on the error probabilities
become weaker, but are still exponentially small). Furthermore, we fix a real-valued function $\lcompconst=\lcompconst(n)$ such that
\[
n^{-1/2+\delta/2}, n^{-j/3} \ll \lcompconst \ll \eps \ll 1.
\]

We use $\ubfs$ to grow the component of some $j$-set $J_1$ until at the beginning of some round $i\in\NN$ one of the following three stopping conditions is reached:
 \begin{enumerate}
 \renewcommand{\theenumi}{S\arabic{enumi}}
 \item \label{primaryStopCond1} the component of $J_1$ is fully explored (i.e.\ $\gen{i}=\emptyset$);
 \item \label{primaryStopCond2} the (partial) component $\gen{1}\cup\ldots\cup\gen{i}$ has reached size at least $\lcompconst n^j$;
 \item \label{primaryStopCond3} the $i$-th generation $\gen{i}$ has reached size at least $\lcompconst^2 n^j$.
 \end{enumerate}
 Moreover, we denote the (first) round in which any these stopping conditions is invoked\footnote{This is well-defined since $\ubfs$ always terminates in finite time.} by 
 \begin{equation}\label{eq:stoptime}
 \stoptime=\stoptime(\lcompconst):=\min_{i\in\NN}\{\eqref{primaryStopCond1}\vee\eqref{primaryStopCond2}\vee\eqref{primaryStopCond3}\text{ holds in round } i\}.
 \end{equation}

\begin{theorem}\label{thm:bddgendeg}
For any $0 \le \ell \le j-1$, there exists a constant $C_\ell >0$ such that for all $i \le \stoptime$, with probability $1-\expprob$ we have
\begin{equation}\label{eq:bddgendeg}
\Delta_\ell (\gen{i}) \le C_\ell \left( \frac{\gensize{i}}{n^\ell}+n^\delta \right).
\end{equation}
\end{theorem}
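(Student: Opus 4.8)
The plan is to prove the bound~\eqref{eq:bddgendeg} by induction on the generation index $i$, tracking for each $\ell$-set $L$ (with $0 \le \ell \le j-1$) how its degree $d_L(\gen{i+1})$ in the new generation is generated from the structure of $\gen{i}$. Fix $L$ and $\ell$. A $j$-set $J' \supseteq L$ lands in $\gen{i+1}$ only because some edge $E$ was queried from a $j$-set $J \in \gen{i}$ with $J' \subset E$; writing $M = L \cap J$ and noting $M$ is an $\ell'$-set for some $\ell' \le \ell$, the edge $E$ must contain $J \cup L$, which has at least $k - j + (j - \ell')= k-\ell'$ vertices determined, leaving at most $j + \ell - \ell'$ free slots... more carefully: given $J$ and $L$, the number of $k$-sets containing $J \cup L$ is at most $\binom{n}{k - j - (j-\ell')}$ when $|J \cup L| = j + (j - \ell')$, i.e.\ of order $n^{k - 2j + \ell'}$, and each such edge contributes at most a bounded number of $j$-sets through $L$. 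So $d_L(\gen{i+1})$ is stochastically dominated by a sum, over $j$-sets $J \in \gen{i}$ and over $\ell'$-sets $M \subseteq L$, of binomial-type contributions from the (unqueried) $k$-sets through $J \cup L$. Summing over the choices of $M \subseteq J \cap L$ and over $J \in \gen{i}$, and using that the number of $J \in \gen{i}$ containing a fixed $M$ is $d_M(\gen{i})$, the conditional expectation of $d_L(\gen{i+1})$ is bounded by a constant times $\sum_{\ell'=0}^{\ell} d_{M}(\gen{i}) \cdot n^{k-2j+\ell'} \cdot p \cdot n^{\ell - \ell'}$ summed over $M$, which — using $p = \Theta(\hat p_{\mathrm g}) = \Theta(n^{-(k-j)})$ — simplifies to $O\big(\sum_{\ell' \le \ell}\sum_{|M|=\ell'} d_M(\gen{i})\, n^{\ell - j}\big)$, and the dominant term $\ell' = \ell$, $M = L$ gives $O(d_L(\gen{i}) n^{\ell-j})$ while lower terms are controlled inductively.

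The key steps, in order, are: (i) set up the domination of $d_L(\gen{i+1})$ by a sum of independent binomial contributions conditioned on the history up to generation $i$, carefully separating the contribution according to $|L \cap J|$; (ii) compute the conditional expectation and show it is at most (some constant depending on $\ell$) times $\frac{\gensize{i}}{n^j} + (\text{lower-order terms in the }d_M,\ M \subsetneq L)$ — here one inductively feeds in the bound for smaller $\ell'$; (iii) apply a concentration inequality (Chernoff/Bernstein for sums of independent Bernoullis, or a Freedman-type martingale bound since we condition on history) to show $d_L(\gen{i+1})$ does not exceed a constant multiple of $\max\{\mathbb E[d_L(\gen{i+1}) \mid \text{history}], n^\delta\}$ with probability $1 - \expprob$; (iv) take a union bound over all $\ell$-sets $L$ (there are $\le n^\ell \le n^j$ of them) and over all rounds $i \le \stoptime$ (polynomially many, since each generation has size $\ge 1$ and the total is $\le \lcompconst n^j \le n^j$), which is absorbed by the $\expprob$ error term since $n^\delta$ is polynomially large in $n$; (v) close the induction on $i$ and then on $\ell$, noting the base case $\ell = 0$ says $|\gen{i}| = \gensize{i} \le C_0(\gensize{i} + n^\delta)$, which is trivial, and $i=0$ or $i=1$ is immediate.

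The main obstacle I expect is step (iii) combined with the telescoping: the degree $d_L(\gen{i+1})$ is a sum of contributions of genuinely varying sizes — a single queried edge through $L$ contributes $O(1)$ new $j$-sets through $L$, but the \emph{number} of edges queried through $J \cup L$ across all $J \in \gen{i}$ depends on how the degrees $d_M(\gen{i})$ themselves are distributed, which is exactly what we are trying to control. One has to be careful that the ``error'' $n^\delta$ does not compound over the $\Theta(\log n / \text{something})$ generations before $\stoptime$; the point is that when $\gensize{i}/n^\ell$ is the dominant term the recursion is essentially multiplicative with a bounded factor coming from the branching-process mean (which near criticality is $1 + O(\eps)$), and when $n^\delta$ dominates, the generation is small enough that the bound is crude but stable. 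A secondary subtlety is handling the contribution of $\ell'$-sets $M \subsetneq L$ with $\ell' < \ell$: these are controlled by the inductive hypothesis for smaller $\ell$ applied to the \emph{same} generation $i$, so the double induction must be organised as: for each $i$, assume~\eqref{eq:bddgendeg} for all $(\ell', i') $ with $i' \le i$ and all $\ell'$, plus all $\ell' < \ell$ at $i' = i+1$ — a standard but bookkeeping-heavy arrangement. Finally, one should double-check the edge case where the $\ell$-set $L$ has $\ell' = \ell$ (i.e.\ $L \subseteq J$): then the relevant edge count through $J$ (which already contains $L$) is $\Theta(n^{k-j})$, $p \cdot n^{k-j} = \Theta(1)$, and each such edge adds $O(1)$ $j$-sets through $L$, giving precisely the $O(d_L(\gen i))$ term with a bounded constant — consistent with the branching heuristic.
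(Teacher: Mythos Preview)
Your overall plan --- double induction on $\ell$ and $i$, splitting the contribution to $d_L(\gen{i+1})$ according to $\ell' = |L \cap J|$ for $J \in \gen{i}$, bounding the cases $\ell' < \ell$ via the inductive hypothesis at smaller $\ell'$, then Chernoff and a union bound --- is exactly the paper's approach. The paper calls the case $\ell' < \ell$ a \emph{jump} to $L$ and the case $\ell' = \ell$ (i.e.\ $L \subseteq J$) a \emph{pivot} at $L$, and treats them in two separate propositions.

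There is, however, a genuine gap in your handling of the pivot case, and it is the heart of the proof. You write that this case contributes ``$O(d_L(\gen{i}))$ with a bounded constant'' and that this is ``consistent with the branching heuristic'', apparently expecting the constant to be the overall branching mean $1+O(\eps)$. A merely bounded constant is not enough: if the recursion reads $d_L(\gen{i+1}) \le c\, d_L(\gen{i}) + A\bigl(\gensize{i}/n^\ell + n^\delta\bigr)$ with $c \ge 1$, then iterating forces $C_\ell$ to grow with $i$, and no uniform constant exists. (Your suggested fix --- that $\gensize{i}$ also grows by a factor $1+O(\eps)$ --- does not absorb the additive jump term $A$.) The paper's key observation is that from a $j$-set $J \supseteq L$, each discovered edge contains at most $\binom{k-\ell}{j-\ell}-1$ \emph{new} $j$-sets through $L$ (the $j$-set $J$ itself is already there, and $j$-subsets not containing $L$ are irrelevant). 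Hence the pivot branching factor is
\[
(1+\eps)\,\frac{\binom{k-\ell}{j-\ell}-1}{\binom{k}{j}-1},
\]
which for $\ell\ge 1$ is bounded by a constant $r_\ell' < 1$ strictly. This contraction is precisely what allows one to solve $C_\ell = r_\ell' C_\ell + (\text{jump constant})$ for a finite $C_\ell$; without it, your step~(v) does not close.

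Two minor points. First, $|J \cup L| = j + \ell - \ell'$, not $j + (j-\ell')$, so several powers of $n$ in your first paragraph are off (your later recomputation in the ``edge case'' paragraph gets the pivot case right). Second, to pass from a bound in terms of $\gensize{i-1}$ to one in terms of $\gensize{i}$ the paper invokes a separate monotonicity lemma ($\gensize{i}\ge\gensize{i-1}$ once generations are large enough); this step also needs to be made explicit.
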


Note that Theorem~\ref{thm:bddgendeg} becomes trivial in the case $j=1$, which is the
main reason why vertex-connectedness is so much easier to handle than high-order connectedness.

In Section~\ref{app:motivation}, we show how Theorem~\ref{thm:bddgendeg} can be applied to prove
the supercritical case of Theorem~\ref{thm:main} in a more concise way than in~\cite{CKKgiant}. The proof of
Theorem~\ref{thm:bddgendeg} forms the main body (Section~\ref{sec:mainproof}) of the paper. The strategy is to split the contribution to the degree of an $\ell$-set
$L$ into two parts, called ``jumps'' and ``pivots''. The contributions made
by each of these parts are bounded separately, in Propositions~\ref{prop:jumps}
and~\ref{prop:pivots} respectively.

\section{Application of Theorem~\ref{thm:bddgendeg}: a simple proof of Theorem~\ref{thm:main}}\label{app:motivation}

In this section we show how Theorem~\ref{thm:bddgendeg} can be used to prove
Theorem~\ref{thm:main}~\eqref{thm:main:supercrit} (i.e.\ the hard case of Theorem~\ref{thm:main})
by following the strategy of~\cite{BR12b} for the graph case: we aim to determine asymptotically
the number $L$ of $j$-sets in \emph{large} $j$-components, i.e. containing at least
$\lambda n^j$ many $j$-sets, using the second moment method. Then it is a simple matter
to show that in fact, almost all of these $j$-sets lie in the same component using a sprinkling argument.
Since the argument already appeared in~\cite{CKKgiant}, we will give only an outline here.

We first explore the $j$-component $C_{J_1}$ of $J_1$ using $\ubfs$ until one of the three stopping conditions
\eqref{primaryStopCond1},~\eqref{primaryStopCond2} and~\eqref{primaryStopCond3} is fulfilled.
We define the \emph{partial components} $C_{J_1}(i):=\gen{1}\cup\ldots\cup\gen{i}$ for any $i$.

We can approximate the search process by a Galton-Watson branching process $\br$ starting with a single vertex
(which represents a $j$-set), and in which the number of children of any vertex $v$ is a random variable $X_v$, where
$$
\frac{X_v}{\binom{k}{j}-1} \sim \Bi \left(\binom{n}{k-j},p\right)
$$
and where the $X_v$ are independent of each other. It is clear that this is an upper coupling
for the search process; the fact that the stopping conditions are invoked before the component
grows too large ensures that it will always be a good approximation whp (see Lemma~\ref{lemma:bdddeg}).

We use this branching process approximation to prove that the probability of the event
$\primaryStop$, that one of stopping conditions~\eqref{primaryStopCond2} and~\eqref{primaryStopCond3} is invoked,
is approximately the same as the survival probability of the associated branching process,
i.e.\ the probability that the branching process survives indefinitely.
Standard branching process techniques show that this probability is approximately
$\tfrac{2\eps}{\binom{k}{j}-1}$.
\begin{claim}\label{claim:survival}
$\PP(\primaryStop)=(1+o(1))\frac{2\eps}{\binom{k}{j}-1}$.
\end{claim}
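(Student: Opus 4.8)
The plan is to sandwich the probability $\PP(\primaryStop)$ between two quantities, both of which are asymptotically equal to the survival probability $\rho$ of the branching process $\br$, and then to compute $\rho = (1+o(1))\tfrac{2\eps}{\binom{k}{j}-1}$ using standard branching process estimates.

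First I would record the elementary computation of $\rho$. Since $\EE X_v = (\binom{k}{j}-1)\binom{n}{k-j}p = (\binom{k}{j}-1)(1+\eps)/(\binom{k}{j}-1) = 1+\eps$, the offspring distribution $X_v$ is a scaled binomial with mean slightly above $1$, and its variance is $(1+o(1))(\binom{k}{j}-1)$ (the binomial $\Bi(\binom{n}{k-j},p)$ has mean $\tfrac{1+\eps}{\binom{k}{j}-1}\to 0$, so it is asymptotically Poisson with that parameter, and multiplying by $\binom{k}{j}-1$ gives variance $(1+o(1))(\binom{k}{j}-1)$). Writing $f(s)=\EE s^{X_v}$ for the probability generating function, the extinction probability $q=1-\rho$ is the smallest fixed point of $f$, and a standard second-order expansion of $f$ near $s=1$ (valid because $\eps\to 0$, so $q$ is close to $1$) gives $\rho = (1+o(1))\,2(\EE X_v - 1)/\VV X_v = (1+o(1))\,2\eps/(\binom{k}{j}-1)$.

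Next I would transfer this to the exploration process. The branching process $\br$ is an upper coupling for $\ubfs$, so on the event that $\ubfs$ survives long enough to trigger~\eqref{primaryStopCond2} or~\eqref{primaryStopCond3}, the coupled branching process also has a large generation; conversely, if $\br$ survives indefinitely then (whp, using Lemma~\ref{lemma:bdddeg} to control the degree discrepancies so that the coupling stays tight until the stopping conditions) $\ubfs$ reaches size $\lcompconst n^j$ or has a generation of size $\lcompconst^2 n^j$. Quantitatively: if the branching process reaches total size $\lcompconst n^j$ or generation size $\lcompconst^2 n^j$, then because $\lcompconst \ll \eps$ and the per-step coupling error is controlled, the exploration process does the same whp, so $\PP(\primaryStop) \ge (1-o(1))\PP(\br \text{ reaches these thresholds})$; and since $\br$ is an upper coupling, $\PP(\primaryStop) \le \PP(\br \text{ reaches these thresholds})$. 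Finally, since $\lcompconst n^j \to \infty$ but $\lcompconst$ is still small compared to $\eps$, the probability that $\br$ reaches generation size $\lcompconst^2 n^j$ or total size $\lcompconst n^j$ but nonetheless dies out is $o(\rho)$ — a surviving branching process with mean offspring $1+\eps$ grows to these sizes whp, and a dying one has total size $O(\eps^{-2})$ whp, which is $o(\lcompconst n^j)$ by the choice $\lcompconst \gg n^{-j/3} \gg \eps^2 n^{-j}$ wait more simply $\lcompconst n^j \gg \eps^{-2}$. Hence $\PP(\br \text{ reaches the thresholds}) = (1+o(1))\rho$, and combining the two inequalities yields $\PP(\primaryStop) = (1+o(1))\rho = (1+o(1))\tfrac{2\eps}{\binom{k}{j}-1}$.

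The main obstacle is the middle step: making the coupling between $\ubfs$ and $\br$ sufficiently tight that the threshold events transfer with only a $1-o(1)$ loss. The issue is that in $\ubfs$, when a $j$-set $J$ of the current generation is queried, some of the $j$-sets in a newly found edge may already have been discovered (they are not neutral), so the true number of children is at most $X_v$ and the discrepancy accumulates over the roughly $\lcompconst n^j$ queried $j$-sets; one must show this accumulated loss is negligible compared with $\eps\,\lcompconst n^j$. This is exactly where the regularity of the generations proved in Theorem~\ref{thm:bddgendeg} (via Corollary~\ref{cor:bddstopgendeg} / Lemma~\ref{lemma:bdddeg}) enters: it guarantees no low-order set is over-represented, so the number of collisions is small, and the coupling error per step is $o(\eps)$ throughout the explored region. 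Since the full argument is given in~\cite{CKKgiant}, I would only sketch this transfer and refer there for the details.
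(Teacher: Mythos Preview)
Your approach matches the paper's: sandwich $\PP(\primaryStop)$ between the survival probabilities of upper and lower branching-process couplings, and compute those via a second-order expansion of the generating function near $1$. The paper packages the lower bound more cleanly by introducing an explicit lower-coupling process $\lbr$ (offspring mean $(1-\elbr)(1+\eps)$ with $\lcompconst\ll\elbr\ll\eps$) and invoking Lemma~\ref{lemma:bdddeg} to get $\lbr\prec\ubfs$ directly, rather than arguing informally that the single upper coupling $\br$ ``stays tight''; it then checks that $\lbr$ and $\ubr$ have the same asymptotic survival probability. One caution: in your final paragraph you attribute the coupling control to Theorem~\ref{thm:bddgendeg} and Corollary~\ref{cor:bddstopgendeg}, but that would be circular, since Claim~\ref{claim:survival} is itself used in the derivation of Corollary~\ref{cor:bddstopgendeg}; the only input needed here, and the only one the paper uses, is Lemma~\ref{lemma:bdddeg}, which you do correctly cite earlier and which is proved independently.
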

For completeness, we prove this claim in Appendix~\ref{app:survProb}.
Furthermore, conditional on~\eqref{primaryStopCond3} being invoked, $\compone$
will be large whp. Also if~\eqref{primaryStopCond2} is invoked, then clearly $\compone$ is large.
This already shows that the first moment of $L$, the number of $j$-sets in large components, is
$\tfrac{2\eps}{\binom{k}{j}-1} \binom{n}{j}$.

In order to determine the second moment of $L$, we consider a second exploration process
(of a $j$-component $C_{J_2}$)
starting
from another $j$-set $J_2$ outside $C_{J_1}(\stoptime)$. (The contribution to the
second moment from $j$-sets $J_2$ inside $C_{J_1}(\stoptime)$ is easily shown to be negligible.)
In order to ensure independence, we first delete all the $j$-sets of $C_{J_1}(\stoptime)$
from the hypergraph, so any $k$-set containing such a $j$-set may no longer be queried.
However, since we have not deleted many $j$-sets, $\br$ is still a good approximation
for this search process. Again, the probability that this search process becomes large is
approximately $\tfrac{2\eps}{\binom{k}{j}-1}$.

This almost completes the proof of Theorem~\ref{thm:main}~\eqref{thm:main:supercrit},
but there is one more case to consider,
namely that the search process from $J_2$ remained small, while in fact
the component is large, but we did not see this because we had deleted
some $j$-sets. We need to show that the probability of this event is very small,
and in particular contributes negligibly to the second moment.
We therefore need to know how many queries we have not made because of
deleting $C_{J_1}(\stoptime)$.

The first observation is that since most of the $j$-sets of $C_{J_1}(\stoptime)$
were fully explored, many potential such queries had already been made while
exploring $C_{J_1}$. The only ones we might have missed are the ones containing
a $j$-set of $\gen{\stoptime}$ and $j$-set of $C_{J_2}$.

However, given two $j$-sets $J,J'$, one from each of these components, how many
$k$-sets contain both of them? For $j=1$ (so in particular for graphs) this is
simply $\binom{n-2}{k-2}$, but for $j\ge 2$ the answer is fundamentally dependent
on the size of the intersection $J\cap J'$. We therefore need the following
corollary of Theorem~\ref{thm:bddgendeg}.

\begin{corollary}\label{cor:bddstopgendeg}
There exists a real-valued function $\xi=\xi(n)$ satisfying $\xi/\log n\to\infty$ and $\xi=o(\lcompconst^2n)$ and constants $\kappa_1,\ldots,\kappa_{j-1}>0$ (independent of $n$) such that, conditional on $\primaryStop$, with probability at least $1-\exp\left(-\Theta(\xi^{1/4})\right)$, for all $1\le \ell\le j-1$ and $\ell$-sets $L$ we have
\begin{equation}\label{eq:bddstopgendeg}
d_L(\gen{\stoptime})\le \kappa_\ell \left(|\gen{\stoptime}|n^{-\ell}+\xi\right).
\end{equation}
\end{corollary}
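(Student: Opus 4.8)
The plan is to deduce the corollary from Theorem~\ref{thm:bddgendeg} applied with the generation index $i = \stoptime$, together with a union bound over the (at most $j-1$) values of $\ell$. The one subtlety is that $\stoptime$ is a random (stopping) time, whereas Theorem~\ref{thm:bddgendeg} gives a bound for each fixed $i \le \stoptime$; so first I would fix a deterministic upper bound $i_{\max}$ on $\stoptime$ and take a union bound over all $i \le i_{\max}$. Since the branching-process approximation $\br$ together with the stopping conditions forces the explored component to have size at most roughly $\lcompconst n^j$, the number of generations before stopping is at most polynomial in $n$ whp (each nonempty generation contributes at least one $j$-set); call a suitable polynomial bound $i_{\max}$. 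Then $\PP(\stoptime > i_{\max})$ is at most $\expprob$ (this is exactly the kind of ``component did not grow too large'' estimate encapsulated in Lemma~\ref{lemma:bdddeg} / the stopping-condition setup), and on the complementary event we may apply Theorem~\ref{thm:bddgendeg} to the single index $i = \stoptime \le i_{\max}$, or alternatively union-bound \eqref{eq:bddgendeg} over all $i \le i_{\max}$ and all $\ell$, losing only a factor of $\mathrm{poly}(n)$ in the (exponentially small) error probability, which is absorbed.

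Next I would convert the additive error term. Theorem~\ref{thm:bddgendeg} gives
\[
\Delta_\ell(\gen{\stoptime}) \le C_\ell\left(\frac{\gensize{\stoptime}}{n^\ell} + n^\delta\right),
\]
and I want to replace $n^\delta$ by $\xi$. Since $\delta < 1/6 < 1/4$ and (using $\lcompconst \gg n^{-1/2+\delta/2}$, so $\lcompconst^2 n \gg n^{2\delta}$) we certainly have $n^\delta = o(\lcompconst^2 n)$ and $n^\delta / \log n \to \infty$, the choice $\xi := n^\delta$ already works: it satisfies $\xi/\log n \to \infty$ and $\xi = o(\lcompconst^2 n)$ as required, and the failure probability $\expprob = \exp(-\Theta(n^{\delta/2})) = \exp(-\Theta(\xi^{1/2}))$ is certainly at most $\exp(-\Theta(\xi^{1/4}))$. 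Setting $\kappa_\ell := C_\ell$ then yields \eqref{eq:bddstopgendeg} verbatim. Finally, the conditioning on $\primaryStop$ costs at most a factor $\PP(\primaryStop)^{-1} = \Theta(\eps^{-1})$ in the error probability (by Claim~\ref{claim:survival}), and since $\eps^{-1} \le n$ while the error is $\exp(-\Theta(\xi^{1/4})) = \exp(-\Theta(n^{\delta/4}))$, this polynomial factor is again absorbed into the $\Theta$ in the exponent.

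The only real obstacle is the first step: making precise that $\stoptime$ is bounded by a deterministic $i_{\max}$ with overwhelming probability, so that a stopping time can be swapped for a union bound over fixed indices. This is routine given the machinery already set up — the upper branching-process coupling $\br$ and the three stopping conditions are designed precisely so that the explored structure never exceeds size $O(\lcompconst n^j)$ before round $\stoptime$ — but it is the one place where care is needed, since Theorem~\ref{thm:bddgendeg} is stated only for fixed $i$. Everything after that is bookkeeping: a union bound over $\ell \in \{1,\dots,j-1\}$ (a constant number of terms) and over $i \le i_{\max}$ (polynomially many), absorbing the resulting polynomial factors and the conditioning factor $\Theta(\eps^{-1})$ into the exponentially small error term, and checking that the concrete choice $\xi = n^\delta$ meets the stated growth conditions.
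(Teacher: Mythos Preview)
Your proposal is correct and follows essentially the same route as the paper: set $\xi = n^\delta$, $\kappa_\ell = C_\ell$, invoke Theorem~\ref{thm:bddgendeg} at $i=\stoptime$, and absorb the conditioning factor $\PP(\primaryStop)^{-1}=\Theta(\eps^{-1})$ into the exponential error via Claim~\ref{claim:survival}. The one place you labour unnecessarily is the ``stopping-time obstacle'': in the paper's reading, Theorem~\ref{thm:bddgendeg} already holds simultaneously for all $i\le\stoptime$ (its proof explicitly takes a union bound over $\ell$, $L$ and $i$, using that $\stoptime\le \lcompconst n^j$ deterministically since each generation before $\stoptime$ is nonempty and their union has fewer than $\lcompconst n^j$ $j$-sets), so no separate $i_{\max}$ argument is needed.
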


Now given $J \in C_{J_2}$,
we can consider all subsets $L$ of $J$ and bound from above the number of $j$-sets
of $\gen{\stoptime}$ that intersect $J$ in $L$.

Thus we obtain an upper bound on the number of $k$-sets that we have not queried
because of deleting $C_{J_1}(\stoptime)$. With some careful calculation, we
observe that the expected number of edges  among these $k$-sets is small, and
applying Markov's inequality, the probability that there is at least one such
edge is small enough that it is negligible.

It remains to show how Corollary~\ref{cor:bddstopgendeg} can be deduced from Theorem~\ref{thm:bddgendeg}.
\begin{proof}[Proof of Corollary~\ref{cor:bddstopgendeg}]
Note that Theorem~\ref{thm:bddgendeg} is a generalisation of Corollary~\ref{cor:bddstopgendeg} in that it applies to \emph{any} generation, and furthermore, is not conditional on the event $\primaryStop$.

Let us denote by~$\mathcal{A}$ the event that~\eqref{eq:bddstopgendeg}
(i.e. the conclusion of Corollary~\ref{cor:bddstopgendeg}) holds
and by $\mathcal{B}$ the event that~\eqref{eq:bddgendeg} (i.e. the conclusion of Theorem~\ref{thm:bddgendeg})
holds for all $i\le \stoptime$.
Then we have $\mathcal{B}\subset \mathcal{A}$ and therefore
\begin{align*}
\PP(\mathcal{A}|\primaryStop) \ge 1-\frac{\Pr(\overline{\mathcal{B}}\wedge \primaryStop)}{\PP(\primaryStop)}
\ge 1-\frac{\Pr(\overline{\mathcal{B}})}{\PP(\primaryStop)}.
\end{align*}
 Hence, using Theorem~\ref{thm:bddgendeg} and Claim~\ref{claim:survival}, we obtain 
\begin{align*}
\PP(\mathcal{A}|\primaryStop)\ge 1-\frac{\expprob}{\Omega(\eps)}= 1-\expprob,
\end{align*}
where the final equality we used the fact that $\log(1/\eps)=o(n^{\delta/2})$.
\end{proof}

This also completes the proof of Theorem~\ref{thm:main}~\eqref{thm:main:supercrit}.

\begin{remark}
As mentioned earlier, Corollary~\ref{cor:bddstopgendeg} already appeared
as Lemma~4 in~\cite{CKKgiant}. The proof was long and complicated,
and in fact a much stronger result was proved (in which the degrees of $\ell$-sets
are asymptotically determined).
Using Theorem~\ref{thm:bddgendeg} allows us to avoid much of this work.
\end{remark}

\section{Preliminaries}\label{sec:prelim}
We first provide some basic properties of the search process $\ubfs$.
Most of these results were already stated in~\cite{CKKgiant,CoKaPe14} in a similar fashion.
Because the proofs are short, we include them for local reference. 

We define two abstract branching processes, which will form upper and lower couplings on the search process $\ubfs$. For these branching processes, the $j$-sets will be represented by vertices.

Let $\ubr$ denote a Galton-Watson branching process starting with a single vertex, and in which the number of children of any vertex is a random variable $X$, where
$$
\frac{X}{\binom{k}{j}-1} \sim \Bi \left(\binom{n}{k-j},p\right).
$$
It is clear that $\ubr$ forms an upper coupling on $\ubfs$, since from any $j$-set we may query at most $\binom{n}{k-j}$ many $k$-sets, each such $k$-set forms an edge with probability $p$, and if it forms an edge, we discover at most $\binom{k}{j}-1$ new $j$-sets. If we actually discover fewer $j$-sets in $\ubfs$, we can artificially add in some dummy $j$-sets, and equally make some additional dummy queries, to ``fill up'' to $\ubr$. We denote this coupling by $\ubfs \prec \ubr$.

For the lower coupling we define a similar branching process $\lbr$, where in this case the number of children of any vertex is distributed as a random variable $Y$, where
$$
\frac{Y}{\binom{k}{j}-1} \sim \Bi \left((1-\elbr)\binom{n}{k-j},p\right)
$$
with some $\elbr=\elbr(n)>0$ satisfying $\lcompconst\ll \elbr\ll \eps$, say $\elbr:=\sqrt{\lcompconst\eps}$.
Note that $\lbr$ does not always form a lower coupling for $\ubfs$, but early on in the search process,
it is very likely to. More specifically, as long as it is still true that from each $j$-set we make at least
$(1-\elbr)\binom{n}{k-j}$ queries to $k$-sets which contain $\binom{k}{j}-1$ undiscovered $j$-sets,
$\lbr$ forms a lower coupling, which we denote by $\lbr \prec \ubfs$.

We first show that whp $\lbr$ will remain a lower coupling during the period
of the search process that we are interested in. We introduce a new stopping time:
let $\largetime$ be the first round $i$ for which either~\eqref{primaryStopCond1}
or~\eqref{primaryStopCond2} is satisfied. Note that we have $\largetime\ge\stoptime$.

\begin{lemma}\label{lemma:bdddeg}
With probability at least $1-\expprob$ for each $0 \le \ell \le j-1$ and $1\le i\le  \largetime$ we have
$$
\Delta_\ell (C_{J_1}(i)) = O(\lambda n^{j-\ell})= o(\elbr n^{j-\ell}).
$$
In particular, with probability at least $1-\expprob$ for any $1\le i\le \largetime$ we have
$$
\lbr\prec \ubfs.
$$
\end{lemma}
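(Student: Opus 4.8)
The plan is to bound $\Delta_\ell(C_{J_1}(i))$ for $i \le \largetime$ by controlling the degree of each $\ell$-set $L$ across the whole partial component. Since $C_{J_1}(i) = \gen{1}\cup\dots\cup\gen{i}$ and, for $i \le \largetime$, stopping condition~\eqref{primaryStopCond2} has not yet been reached, we have $|C_{J_1}(i)| < \lcompconst n^j$. Thus we already know the total number of $j$-sets discovered so far is less than $\lcompconst n^j$; the task is to show no single $\ell$-set $L$ captures more than $O(\lcompconst n^{j-\ell})$ of them, i.e.\ that the discovered $j$-sets are not pathologically concentrated around any small set $L$. First I would fix an $\ell$-set $L$ (there are $O(n^\ell)$ of them, cheap for a union bound) and track the quantity $d_L(C_{J_1}(i))$ as the search progresses.

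The key step is a supermartingale / direct tail argument on how $d_L$ grows. A $j$-set containing $L$ can only be discovered inside some edge $E$ that was queried from a previously active $j$-set $J$; such an edge $E$ contributes at most $\binom{k-\ell}{j-\ell}$ new $j$-sets containing $L$ (those $j$-sets inside $E$ that contain $L$). Now condition on the history: the number of queried $k$-sets $E$ that contain $L$ and form edges is stochastically dominated by $\Bi(Q_L, p)$, where $Q_L$ is the number of $k$-sets containing $L$ that get queried. Crucially, a $k$-set containing $L$ is only queried from a $j$-set $J$ that is itself active, and $J$ must be one of the (fewer than $\lcompconst n^j$) discovered $j$-sets; but more to the point, each queried $k$-set containing $L$ also contains some active $j$-set $J$, and the number of $k$-sets containing a fixed $j$-set is $\binom{n-j}{k-j}$. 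So $Q_L \le \binom{n-j}{k-j}\cdot(\text{number of active }j\text{-sets ever containing a relevant vertex})$ — this overcounts badly, so instead I would bound $Q_L$ more carefully: $Q_L \le \sum_{J \text{ discovered}} |\{k\text{-sets} \supseteq J \cup L\}|$. If $L \not\subseteq J$ then $|\{k\text{-sets} \supseteq J\cup L\}| = O(n^{k-j-1})$ (at least one extra vertex beyond $J$ must be pinned to lie in $L$), while if $L \subseteq J$ it is $O(n^{k-j})$ but then $J$ itself already contributes to $d_L$. Summing, $Q_L = O(d_L(C_{J_1}(i)) \cdot n^{k-j}) + O(\lcompconst n^j \cdot n^{k-j-1}) = O(d_L \cdot n^{k-j} + \lcompconst n^{k-1})$, and each such queried $k$-set yields a new $L$-containing $j$-set only with probability $p = \Theta(n^{-(k-j)})$.

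Plugging in, the expected number of new $L$-containing $j$-sets discovered, given the current value $d = d_L(C_{J_1}(i))$, is $O(d + \lcompconst n^{j-1})$ times a constant, and since $\lcompconst \gg n^{-j/3}$ (indeed $\lcompconst n^{j-1} = \lcompconst n^{j-\ell} \cdot n^{\ell - 1} \le \lcompconst n^{j-\ell}$ for $\ell \ge 1$, and for $\ell = 0$ the statement is about $|C_{J_1}(i)|$ itself and follows from~\eqref{primaryStopCond2}), this sets up a branching-type recursion: $d_L$ grows by roughly a constant multiplicative factor per generation, but with a bounded number of generations before $\largetime$ — actually the cleaner route is to observe that $d_L(C_{J_1}(i))$ is dominated by the total progeny of a Galton–Watson process with offspring mean $cp \cdot n^{k-j} = \Theta(1)$ seeded by $O(\lcompconst n^{j-\ell} \cdot p n^{k-j}) = O(\lcompconst n^{j-\ell})$ — wait, I need the seed count right: the ``external input'' to $d_L$ from non-$L$-containing active $j$-sets is, per generation, $O(p \cdot \lcompconst n^j \cdot n^{k-j-1}) = O(\lcompconst n^{j-1})$. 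Since the offspring mean is a constant bounded away from $1$ (because $\eps \to 0$, $p \approx \hat p_{\mathrm g}$ makes the mean $(1\pm\eps)$ which is $\le 1 + o(1)$ — here subcriticality is delicate), and because condition~\eqref{primaryStopCond2} caps the run, I would instead use a blunt Chernoff bound: sum the per-generation Binomials, note the total is $\Bi(N, p)$ with $N = O(\lcompconst n^j \cdot n^{k-j}) = O(\lcompconst n^k)$ (every queried $k$-set comes from one of $<\lcompconst n^j$ discovered $j$-sets), restricted to $k$-sets containing $L$, so $N_L = O(\lcompconst n^j \cdot n^{k-j-1} + d_L n^{k-j})$; the first term gives mean $O(\lcompconst n^{j-1})$ and Chernoff gives the desired bound with failure probability $\expprob$ as long as $\lcompconst n^{j-1} \ge n^{\delta}$-ish, which holds since $\lcompconst \gg n^{-1/2+\delta/2}$ and $j \ge 1$... the self-referential $d_L n^{k-j}$ term is the obstacle and is handled by a bootstrap: assume $d_L \le K\lcompconst n^{j-\ell}$, derive it grows to at most $(1+o(1)) K \lcompconst n^{j-\ell}$, contradiction for $K$ large.

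\textbf{Main obstacle.} The hard part will be the self-referential nature of the bound on $Q_L$: the number of queries touching $L$ depends on how many $L$-containing $j$-sets have already been discovered, which is exactly the quantity we are bounding. I expect to resolve this either by a stopping-time argument (stop the first time $d_L$ exceeds the target $C\lcompconst n^{j-\ell}$, show this happens with probability $\expprob$ via a Chernoff/Azuma bound on the stopped process) or by the bootstrap indicated above. The second, essentially cosmetic, point is that once $\Delta_\ell(C_{J_1}(i)) = O(\lcompconst n^{j-\ell}) = o(\elbr n^{j-\ell})$ holds for all $\ell \le j-1$, the coupling $\lbr \prec \ubfs$ follows immediately: from each newly active $j$-set $J$ we query its $\binom{n-j}{k-j}$ enveloping $k$-sets, and the number of these that contain an already-discovered $j$-set is at most $\sum_{\ell=0}^{j-1}\binom{j}{\ell}\Delta_\ell(C_{J_1}(i)) \cdot O(n^{k-j-1}) \cdot(\dots)$ — more simply, a $k$-set $E \supseteq J$ fails to contribute its full $\binom{k}{j}-1$ fresh $j$-sets only if $E$ meets the existing partial component in a $j$-set other than $J$, i.e.\ $E$ contains two discovered $j$-sets sharing an $\ell$-set for some $\ell$; counting these using the degree bounds shows at most an $o(\elbr)$-fraction of the $\binom{n-j}{k-j}$ queries from $J$ are "bad", which is exactly the condition under which $\lbr$ is a lower coupling. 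A union bound over $i \le \largetime$ and over all $O(n^\ell)$ choices of $L$ absorbs into the $\expprob$ error term since $\largetime \le \binom{n}{j}$ is only polynomial in $n$.
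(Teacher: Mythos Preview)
Your proposal has the right overall architecture --- bound jumps (external input from $j$-sets not containing $L$) and pivots (growth from $j$-sets already containing $L$) separately, then deduce the coupling --- and your treatment of the coupling step at the end is essentially what the paper does. But there are two genuine gaps in the degree bound.

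\textbf{The jump bound is too crude for $\ell \ge 2$.} You bound the number of $k$-sets containing both $L$ and a discovered $j$-set $J$ with $L\not\subseteq J$ by $O(n^{k-j-1})$, and then sum over all $<\lambda n^j$ discovered $j$-sets to get $O(\lambda n^{k-1})$ queries, hence $O(\lambda n^{j-1})$ edges. You then write ``$\lambda n^{j-1} = \lambda n^{j-\ell}\cdot n^{\ell-1} \le \lambda n^{j-\ell}$ for $\ell\ge 1$'', but this inequality is backwards: for $\ell\ge 2$ the factor $n^{\ell-1}$ is large, so your jump bound is off by a factor of $n^{\ell-1}$. The fix, and what the paper does, is to stratify by $w=|J\cap L|$: for each $0\le w\le \ell-1$ the number of discovered $J$ with $|J\cap L|=w$ is at most $\binom{\ell}{w}\Delta_w(C_{J_1})$, and each such $J$ sits in $O(n^{k-j-\ell+w})$ many $k$-sets containing $L$. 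Using an induction hypothesis $\Delta_w(C_{J_1})=O(\lambda n^{j-w})$ for $w<\ell$, every stratum contributes $O(\lambda n^{k-\ell})$ queries, giving the correct $O(\lambda n^{j-\ell})$ jump contribution. Without this induction on $\ell$ the argument does not close for $\ell\ge 2$.

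\textbf{The pivot process is not ``delicate'' --- it is uniformly subcritical.} You worry that the self-referential term has offspring mean $(1\pm\eps)$ and propose a bootstrap or stopping-time workaround. In fact the pivot process at a fixed $\ell$-set $L$ has offspring per individual dominated by $\bigl(\binom{k-\ell}{j-\ell}-1\bigr)\cdot\Bi\bigl(\binom{n}{k-j},p\bigr)$, because an edge found from a $j$-set $J\supseteq L$ contains at most $\binom{k-\ell}{j-\ell}-1$ \emph{new} $j$-sets through $L$ (the one we queried from is not new). The mean is therefore $(1+\eps)\frac{\binom{k-\ell}{j-\ell}-1}{\binom{k}{j}-1}$, which for $\ell\ge 1$ is a constant strictly less than $1$. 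So the total progeny of $x$ seeds is $O(x)$ with probability $1-\expprob$ once $x\ge n^\delta$, by a direct Chernoff bound on the number of successes among the first $Cx\binom{n}{k-j}$ Bernoulli trials. This replaces your proposed bootstrap entirely and is the key observation you are missing.
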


Note that this result has a broadly similar flavour to Corollary~\ref{cor:bddstopgendeg} and Theorem~\ref{thm:bddgendeg}, with the crucial difference that the degree bound applies to
the whole component, not to a single generation. While Lemma~\ref{lemma:bdddeg} has a strong resemblance to Lemma~12 in~\cite{CoKaPe14} and Lemmas~14 and~17 in~\cite{CKKgiant}, it is not immediately apparent that these directly imply Lemma~\ref{lemma:bdddeg}. Since a formal proof of this implication would involve checking tedious technicalities, we instead provide a stand-alone proof for Lemma~\ref{lemma:bdddeg} in Appendix~\ref{app:bdddegProof} for completeness.

We use Lemma~\ref{lemma:bdddeg} twice: firstly, we use it in the proof of Claim~\ref{claim:survival} in Appendix~\ref{app:survProb} (with $i\le \largetime$); secondly, we use it to deduce that after seeing a sufficiently large generation in $\ubfs$, the next one will typically not be smaller.

\begin{lemma}\label{lem:stayBig}
With probability at least $1-\expprob$, for all $1\le i\le \stoptime$ such that $|\gen{i}|\ge n$ we have 
$$
|\gen{i+1}|\ge|\gen{i}|.
$$ 
\end{lemma}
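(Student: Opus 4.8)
The plan is to use the lower coupling $\lbr \prec \ubfs$ provided by Lemma~\ref{lemma:bdddeg}, which is valid with probability $1-\expprob$ for all rounds up to $\largetime \ge \stoptime$. Conditioning on this coupling holding, each $j$-set in generation $i$ gives rise to at least $Y/(\binom{k}{j}-1)$ queries that each independently create $\binom{k}{j}-1$ genuinely new $j$-sets of generation $i+1$, where $Y/(\binom{k}{j}-1) \sim \Bi((1-\elbr)\binom{n}{k-j},p)$. Hence, stochastically dominating from below, $|\gen{i+1}|$ is at least a sum of $|\gen{i}|$ independent copies of $Y$, whose expectation is
\[
|\gen{i}| \cdot \EE[Y] = |\gen{i}| \cdot (\tbinom{k}{j}-1)(1-\elbr)\tbinom{n}{k-j}p = |\gen{i}| (1-\elbr)(1+\eps),
\]
using $p = (1+\eps)\hat p_{\mathrm g}$ and the definition of $\hat p_{\mathrm g}$. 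Since $\elbr \ll \eps$, this expectation is $(1+\eps - \elbr + O(\elbr\eps))|\gen{i}| \ge (1 + \eps/2)|\gen{i}|$ for $n$ large. So it suffices to show that this sum of independent variables does not drop below its mean by more than a $(1+\eps/2)$ factor.

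The key step is a concentration bound. The sum $S := \sum$ of $|\gen{i}|$ i.i.d.\ copies of $Y$ has mean $\mu \ge (1+\eps/2)|\gen{i}| \ge (1+\eps/2)n$ (using the hypothesis $|\gen{i}|\ge n$), and each summand $Y$ is a sum of Bernoulli variables scaled by $\binom{k}{j}-1$, so $S$ is (a constant multiple of) a Binomial random variable $\Bi\big(|\gen{i}|(1-\elbr)\binom{n}{k-j}, p\big)$. A Chernoff bound gives
\[
\PP\big(S \le (1-\eps/4)\mu\big) \le \exp\big(-\Theta(\eps^2 \mu)\big) = \exp\big(-\Theta(\eps^2 |\gen{i}|)\big) \le \exp\big(-\Theta(\eps^2 n)\big).
\]
On the complementary event, $|\gen{i+1}| \ge S/(\text{const}) \ge (1-\eps/4)(1+\eps/2)|\gen{i}| \ge |\gen{i}|$, since $(1-\eps/4)(1+\eps/2) = 1 + \eps/4 - \eps^2/8 > 1$ for small $\eps$. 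Finally, since $\eps^2 n \ge \eps^2 n^{1-\delta} \to \infty$ and in fact $\eps^2 n \gg \eps^2 n^{1-\delta} \gg n^{\delta/2}$ (using $\eps^2 n^{1-\delta}\to\infty$ together with, say, $\delta < 1/6$), we have $\exp(-\Theta(\eps^2 n)) = \expprob$, as required for a single round.

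To conclude, I would take a union bound over all $i \le \stoptime$. Since $\stoptime$ is at most the total number of rounds, which is certainly at most $\binom{n}{j} \le n^j = \mathrm{poly}(n)$, a union bound over these polynomially many rounds multiplies the failure probability by a polynomial factor, which is absorbed into $\expprob$; together with the $\expprob$ failure probability of the coupling from Lemma~\ref{lemma:bdddeg}, this gives the claimed bound. The main (minor) obstacle is just bookkeeping: one must be a little careful that the stochastic domination $|\gen{i+1}| \succeq S$ is conditionally valid given the history up to generation $i$ — this is exactly what $\lbr \prec \ubfs$ encodes, namely that the queries made from generation $i$ reveal $k$-sets not previously queried and that each contributes $\binom{k}{j}-1$ undiscovered $j$-sets — so that the Chernoff bound applies conditionally and the union bound goes through without circularity.
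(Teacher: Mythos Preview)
Your proposal is correct and follows essentially the same route as the paper: invoke the lower coupling $\lbr\prec\ubfs$ from Lemma~\ref{lemma:bdddeg}, observe that conditionally on $\gensize{i}$ the next generation stochastically dominates $\left(\binom{k}{j}-1\right)Z_*$ with $Z_*\sim\Bi\!\big(\gensize{i}(1-\elbr)\binom{n}{k-j},p\big)$, and apply a Chernoff bound using $\gensize{i}\ge n$ to get $\exp(-\Theta(\eps^2 n))=\expprob$. Your write-up is in fact slightly more careful than the paper's in making the union bound over $i$ explicit; one small slip is the ``$S/(\text{const})$'' --- the coupling gives $|\gen{i+1}|\ge S$ directly, with no division.
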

This lemma will be applied in Section~\ref{sec:mainproof} during the proofs of Propositions~\ref{prop:jumps} and~\ref{prop:pivots}. 
\begin{proof}
By the coupling $\lbr\prec\ubfs$ provided by Lemma~\ref{lemma:bdddeg}, conditional on $|\gen{i}|$, we may couple $|\gen{i+1}|$ with a random variable $Z_*$ such that $|\gen{i+1}|\ge \left(\binom{k}{j}-1\right)Z_*$, where 
$$
Z_*\sim \Bi\left((1-\elbr)\binom{n}{k-j}|\gen{i}|,p\right) .
$$
Note that $\EE(Z_*)=(1-\elbr)(1+\eps)\left(\binom{k}{j}-1\right)^{-1}|\gen{i}|$, and thus we obtain
\begin{align*}
\PP\left(|\gen{i+1}|<|\gen{i}|\right)&\le \PP\left(Z_*\le \left(\binom{k}{j}-1\right)^{-1}|\gen{i}|\right)\\
&\le \exp\left(-\frac{((1-\elbr)(1+\eps)-1)^2\left(\binom{k}{j}-1\right)^{-1}\gensize{i}}{2(1-\elbr)(1+\eps)}\right)\\
&\le\exp\left(-\Theta(\eps^2n)\right)\le\expprob,
\end{align*}
where the penultimate inequality follows since $\gensize{i}\ge n$. 
\end{proof}

At various points in the proof of Theorem~\ref{thm:bddgendeg} the following result, which is a direct consequence of a Chernoff bound, will be more convenient than the standard
Chernoff bound formulations.

\begin{lemma}\label{lem:Chernoff}
For any $m=m(n)\in \NN$ and $p=p(n) \in [0,1]$ and constant $a >0$ the following holds. Let $X$ be a random variable distributed as $\Bi (m,p)$. Then with probability at least $1-\expprob$,
$$
X\le (1+a)mp + 2n^\delta.
$$
\end{lemma}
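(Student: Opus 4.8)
The plan is to reduce the statement to a single application of a Bernstein-type Chernoff inequality, with a case split according to the size of $\mu := \EE(X) = mp$ relative to $n^\delta$. First I would recall the standard tail bound for $X\sim\Bi(m,p)$: for every $\lambda\ge 0$,
\[
\PP\left(X\ge \mu+\lambda\right)\le \exp\left(-\frac{\lambda^2}{2(\mu+\lambda/3)}\right),
\]
and then set $\lambda := a\mu + 2n^\delta$, so that the event $\{X > (1+a)mp + 2n^\delta\}$ is exactly $\{X > \mu+\lambda\}$. The degenerate cases $m=0$ or $p=0$ are trivial, since then $X\equiv 0\le 2n^\delta$, so we may assume $\mu>0$.

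Next I would bound the exponent in the two regimes. If $\mu\le n^\delta$, then $\lambda\ge 2n^\delta\ge 2\mu$, hence $\mu+\lambda/3\le \tfrac56\lambda$ and the exponent is at least $\tfrac{3\lambda}{5}\ge \tfrac{6}{5}n^\delta$; so here the additive slack $2n^\delta$ does the work. If instead $\mu> n^\delta$, then $\lambda\ge a\mu$ and $\mu+\lambda/3 = \mu(1+a/3) + \tfrac23 n^\delta \le \mu(\tfrac53+\tfrac a3)$, so the exponent is at least $\tfrac{a^2\mu}{2(\tfrac53+\tfrac a3)}=\Omega(a^2\mu)=\Omega(a^2 n^\delta)$; so here the multiplicative slack $a\mu$ does the work. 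In either case, since $a>0$ is a fixed constant, the failure probability is at most $\exp(-\Omega(n^\delta))$, which is comfortably smaller than $\expprob$; we state only the weaker bound $1-\expprob$ for uniformity with the other error estimates in the paper.

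There is no genuine obstacle here: the only point requiring a little care is to pick a form of the Chernoff bound that simultaneously handles the ``small mean'' regime (where $2n^\delta$ is the dominant slack) and the ``large mean'' regime (where $a\mu$ is the dominant slack), rather than treating, say, the multiplicative bound $\PP(X\ge(1+a)\mu)\le\exp(-\Omega(a^2\mu))$ and an additive bound separately and then patching them together. Using the single inequality above with the split choice $\lambda = a\mu + 2n^\delta$ makes the two cases nearly identical and keeps the calculation to a couple of lines.
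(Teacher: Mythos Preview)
Your proof is correct and follows essentially the same approach as the paper: a case split on whether $\mu=mp$ is at most or greater than $n^\delta$, followed by a Chernoff-type bound in each case. The only cosmetic difference is that the paper applies two separate Chernoff forms (a multiplicative bound after dominating by $\Bi(m,n^\delta/m)$ in the small-mean case, and an additive bound in the large-mean case), whereas you use a single Bernstein inequality with $\lambda=a\mu+2n^\delta$ throughout; this unification is slightly tidier but not materially different.
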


\begin{proof}
We split the proof into two cases. If $mp \le n^\delta$, then $X$ is dominated by a random variable $Y\sim \Bi(m,p')$, where $p':=n^\delta/m$ (if $p' >1$, then $m<n^\delta$ and the upper bound is trivial). Let $\mu:= \mathbb{E}(Y)=n^\delta$. Now using a multiplicative Chernoff bound, which states that
$$
\PP(Y \ge (1+\eta)\mu) \le \left(\frac{e^\eta}{(1+\eta)^{1+\eta}}\right)^\mu
$$
we have
\begin{align*}
\PP (X \ge 2n^\delta) & \le \PP (Y \ge 2n^\delta) = \PP(Y\ge 2\mu) \le \left(\frac{e}{2^2}\right)^{n^\delta} = \exp \left( -\Theta\left(n^\delta\right) \right).
\end{align*}

On the other hand, if $mp>n^{\delta}$, then by an additive Chernoff bound, we obtain
$$
\PP\left(X\ge (1+a)mp\right)\le \exp\left(-\frac{a^2mp}{2(a+1/3)}\right)=\exp\left(-\Omega\left(n^\delta\right)\right), 
$$
and the statement follows.
\end{proof}

\section{Proof of main result}\label{sec:mainproof}

We prove Theorem~\ref{thm:bddgendeg} for a set of recursively defined constants $C_\ell$. (Note that we have made no attempt to optimise these constants.)

We first define $\cconstl{\ell}:= \binom{k-\ell}{j-\ell}-1$ for each $0\le \ell \le j-1$, and set $w_0=w_0(\ell):= \max\{0,j+\ell-k\}$ and
$r_\ell:= \frac{\cconstl{\ell}}{\cconst}$. Note that $r_\ell <1$ for $\ell >0$.
Therefore we can fix some constant $\alpha>0$ such that
$r_\ell' := (1+\alpha)(1+\eps)r_\ell <1$ for all $1\le \ell \le j-1$.

We set
$$
C_0:=1
$$
and iteratively for $1\le \ell \le j-1$ we let
\begin{align}\label{eq:recursion}
C_\ell' & := \binom{k-\ell}{j-\ell} \max \left\lbrace \frac{(1+\alpha)(1+\eps)(k-j)!}{\cconst}
\sum_{w=w_0}^{\ell-1} \binom{\ell}{w}\frac{C_w}{(k-j-\ell+w)!} , 3 \right\rbrace \\ 
C_\ell & := \frac{C_\ell' +2\cconstl{\ell}+1}{1-r_\ell'}. \label{eq:recursion2}
\end{align}
Let us observe that $C_\ell' = f(C_0,\ldots,C_{\ell-1};k,j,\ell)$, while $C_\ell = g(C_\ell';k,j,\ell)$, so these constants are recursively well-defined.

We first note that Theorem~\ref{thm:bddgendeg} is trivial in the case $\ell=0$, since then $\Delta_\ell(\gen{i}) = \gensize{i} = C_0 \frac{\gensize{i}}{n^\ell}$ for any $i$. Therefore in the remainder of the proof we will assume that $\ell \ge 1$.

The proof of Theorem~\ref{thm:bddgendeg} relies on distinguishing two types of contribution to the degrees of $\ell$-sets. 
\begin{definition}\label{def:jumppiv} Let $L$ be an $\ell$-set with $1\le \ell\le j-1$ and let $i\ge 1$.
 \begin{enumerate}[(i)]
 \item A \emph{jump} to $L$ (in step $i$) occurs when we query a $k$-set containing $L$ from a $j$-set in $\gen{i-1}$ which did not contain $L$ and the $k$-set forms an edge of $\cH^k(n,p)$. Such an edge contributes at most $\binom{k-\ell}{j-\ell}$ to $\deggen{L}{i}$.
 \item A \emph{pivot} at $L$ (in step $i$) occurs when we query any $k$-set from a $j$-set in $\gen{i-1}$ containing $L$ and it forms an edge of $\cH^k(n,p)$. Such an edge contributes at most $\binom{k-\ell}{j-\ell}-1$ to $\deggen{L}{i}$. 
 \end{enumerate}
 \end{definition}
 
The following two propositions bound the contribution to the degree $\deggen{L}{i}$ made by jumps and pivots, respectively. We first state these propositions and then show how they combine to prove Theorem~\ref{thm:main}, before providing their proofs.

\begin{proposition}\label{prop:jumps}
For any $0 \le \ell \le j-1$, any $\ell$-set $L$ and any $2\le i \le \stoptime$, with probability at least $1-\expprob$ the contribution made to $\deggen{L}{i}$ by jumps is
$$
\contrJumps{L}{i} \le C_\ell' \left( \frac{\gensize{i}}{n^\ell}+n^\delta \right).
$$
\end{proposition}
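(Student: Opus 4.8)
The plan is to bound $\contrJumps{L}{i}$ by summing, over all $j$-sets $J \in \gen{i-1}$ with $L \not\subseteq J$, the contribution to $\deggen{L}{i}$ coming from edges queried at $J$. A single edge $E$ discovered from such a $J$ contributes at most $\binom{k-\ell}{j-\ell}$ to $\deggen{L}{i}$, and for $E$ to contribute it must contain $L$. So the first step is to fix a $j$-set $J$ in generation $i-1$ and a possible "overlap pattern": if $J \cap L$ has size some $w$, then when we query a $k$-set $K \supseteq J$, for $K$ to contain $L$ it must also contain the $\ell - w$ vertices of $L \setminus J$. The number of vertices that $K$ adds beyond $J$ is $k - j$, and the number of these $k$-sets that additionally contain a prescribed $(\ell-w)$-set disjoint from $J$ is $\binom{n - j - (\ell - w)}{k - j - (\ell - w)}$, each forming an edge with probability $p$. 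Summing over which $(\ell-w)$-subset of $L$ lies outside $J$, and noting the constraint $w \ge w_0 = \max\{0, j+\ell-k\}$ for such a $K$ to exist, this gives the combinatorial factors $\binom{\ell}{w}$ and the falling-factorial / $(k-j-\ell+w)!$ terms appearing in~\eqref{eq:recursion}.

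Next, I would organise the count by $w = |J \cap L|$: the number of $j$-sets $J \in \gen{i-1}$ with $|J \cap L| = w$ is controlled by $\Delta_w(\gen{i-1})$ — indeed it is at most $\binom{\ell}{w}$ times the maximum degree of a $w$-subset of $L$ in $\gen{i-1}$, so it is at most $\binom{\ell}{w}\Delta_w(\gen{i-1})$. Here is where we feed in the inductive hypothesis: since $w \le \ell - 1 < \ell \le j-1$, Theorem~\ref{thm:bddgendeg} has already been established for $\ell$ replaced by $w$ (the constants $C_w$ were defined before $C_\ell$, and the theorem is proved by induction on $\ell$), so with probability $1 - \expprob$ we have $\Delta_w(\gen{i-1}) \le C_w\big(\gensize{i-1}n^{-w} + n^\delta\big)$ for all $w < \ell$ simultaneously (union bound over the finitely many values of $w$ and subsets). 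Combining, the expected number of contributing edges, conditional on $\gen{i-1}$ and on the degree bounds holding, is at most
\[
(1+\eps)\hat{p}_{\mathrm{g}}\binom{n}{k-j}\cdot (k-j)!/(k-j-\ell+w)!\cdot n^{-(\ell-w)}\cdot\text{(combinatorial factors)}\cdot\sum_{w=w_0}^{\ell-1}\binom{\ell}{w}C_w\big(\gensize{i-1}n^{-w}+n^\delta\big),
\]
which, after simplifying $p\binom{n}{k-j} = (1+\eps)\big(\binom{k}{j}-1\big)^{-1}(1+o(1))$ and $n^{-(\ell-w)}\cdot\gensize{i-1}n^{-w} = \gensize{i-1}n^{-\ell}$, is of the shape $\tfrac{C_\ell'}{\binom{k-\ell}{j-\ell}}\big(\gensize{i-1}n^{-\ell} + n^\delta\big)$ up to the $(1+\alpha)$ slack built into $C_\ell'$. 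Multiplying by the per-edge factor $\binom{k-\ell}{j-\ell}$ gives the target bound $C_\ell'\big(\gensize{i-1}n^{-\ell}+n^\delta\big)$ in expectation; but we want it in terms of $\gensize{i}$, not $\gensize{i-1}$, so I would use Lemma~\ref{lem:stayBig} to pass from $\gensize{i-1}$ to $\gensize{i}$ (when $\gensize{i-1}\ge n$, the next generation is at least as large, so $\gensize{i-1}\le\gensize{i}$ — and when $\gensize{i-1} < n$ the $n^\delta$ term is, after adjusting the constant, enough to absorb everything since a small generation makes few jumps). Finally, since the number of contributing edges is dominated by a sum of independent (or independently-coupled via $\ubfs\prec\ubr$) Bernoulli trials conditional on $\gen{i-1}$, I apply the Chernoff-type bound Lemma~\ref{lem:Chernoff} to turn the bound in expectation into a bound with probability $1 - \expprob$, absorbing the additive $2n^\delta$ from Lemma~\ref{lem:Chernoff} into the $n^\delta$ term at the cost of enlarging the constant (this is the role of the "$\max\{\cdot, 3\}$" in the definition of $C_\ell'$).

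The main obstacle is the bookkeeping of the two-level structure: a contributing jump is indexed both by the source $j$-set $J\in\gen{i-1}$ (whose count is controlled by a lower-order degree bound $\Delta_w$) and by the new vertices added in the queried $k$-set (whose count is a clean binomial coefficient), and one must be careful that these are genuinely independent enough to invoke a Chernoff bound — this is exactly where the upper coupling $\ubfs \prec \ubr$ is needed, so that the queries from distinct $j$-sets, and distinct queries from one $j$-set, are independent Bernoulli$(p)$ trials. A secondary subtlety is making sure the union bound over all $\ell$-sets $L$ (of which there are polynomially many in $n$) is swallowed by the $\expprob$ error probability, which holds because $\exp(-\Theta(n^{\delta/2}))$ beats any polynomial factor; and one must handle the boundary regime $\gensize{i-1} < n$ (or more generally the regime where the $n^\delta$ term dominates) separately, as sketched above, since Lemma~\ref{lem:stayBig} only applies once generations are large. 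None of the calculations is deep, but the interplay of the induction on $\ell$, the passage from generation $i-1$ to $i$, and the Chernoff step must be sequenced carefully.
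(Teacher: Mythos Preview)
Your proposal is correct and follows essentially the same route as the paper: bound the number of potential jump-queries by stratifying over $w=|J\cap L|$, apply the inductive hypothesis $\Delta_w(\gen{i-1})\le C_w(\gensize{i-1}n^{-w}+n^\delta)$ for $w<\ell$, use Lemma~\ref{lem:Chernoff} to control the number of edges found, multiply by $\binom{k-\ell}{j-\ell}$, and finally convert $\gensize{i-1}$ to $\gensize{i}$ via Lemma~\ref{lem:stayBig} (with the side case $\gensize{i-1}<n$ absorbed into the $n^\delta$ term and the ``$\max\{\cdot,3\}$''). One small remark: you do not actually need the coupling $\ubfs\prec\ubr$ for the Chernoff step, since distinct queries in $\ubfs$ are to distinct $k$-sets and hence are genuinely independent Bernoulli$(p)$ trials; and the union bound over $L$ is taken in the proof of Theorem~\ref{thm:bddgendeg} rather than inside the proposition, which is stated for a fixed $L$.
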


\begin{proposition}\label{prop:pivots}
For any $0 \le \ell \le j-1$, any $\ell$-set $L$ and any $2\le i \le \stoptime$, with probability at least $1-\expprob$ the contribution made to $\deggen{L}{i}$ by pivots is
$$
\contrPivots{L}{i} \le (r_\ell'C_\ell + 2c_\ell+1) \left( \frac{\gensize{i}}{n^\ell}+n^\delta \right).
$$
\end{proposition}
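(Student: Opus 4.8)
\textbf{Proof proposal for Proposition~\ref{prop:pivots}.}

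The plan is to fix an $\ell$-set $L$ and estimate the total number of pivots at $L$ over all rounds up to $i$, i.e.\ the number of edges found when querying from a $j$-set of some generation $\gen{i'-1}$ with $2\le i'\le i$ that contains $L$. The key observation is that the number of such queries is controlled by the quantity $\sum_{i'<i} d_L(\gen{i'})$: from each $j$-set of $\gen{i'-1}$ containing $L$ we make at most $\binom{n}{k-j}$ queries. Since the bound we are proving is precisely about $d_L$, this is a self-referential estimate, and the natural way to close the loop is an \emph{induction on $i$} (for fixed $\ell$), using Theorem~\ref{thm:bddgendeg} -- which we may assume holds for all earlier generations -- as the inductive hypothesis. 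Concretely: conditional on the history of the process up to the start of round $i'$, the number of edges discovered from the $d_L(\gen{i'-1})$ relevant $j$-sets is stochastically dominated by $\Bi\!\left(d_L(\gen{i'-1})\binom{n}{k-j},p\right)$, whose mean is $d_L(\gen{i'-1})(1+\eps)/\bigl(\binom{k}{j}-1\bigr)$. Each such edge contributes at most $c_\ell=\binom{k-\ell}{j-\ell}-1$ to $d_L(\gen{i'})$ (Definition~\ref{def:jumppiv}(ii)), so in expectation the pivot-contribution in step $i'$ is at most $(1+\eps)c_\ell d_L(\gen{i'-1})/\bigl(\binom{k}{j}-1\bigr)$. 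Writing $r_\ell=c_\ell/c_0=c_\ell/\bigl(\binom{k}{j}-1\bigr)$ -- wait, one must be careful: the paper sets $c_0=\binom{k}{j}-1$ and $c_\ell=\binom{k-\ell}{j-\ell}-1$, so $r_\ell=c_\ell/c_0$, and $(1+\eps)c_\ell/c_0 = (1+\eps)r_\ell$, which the constant $\alpha$ was chosen to keep $(1+\alpha)(1+\eps)r_\ell=r_\ell'<1$.

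The next step is to make this rigorous with a concentration argument rather than just expectations. I would apply Lemma~\ref{lem:Chernoff} to the binomial governing the pivots discovered in step $i'$, with $m=d_L(\gen{i'-1})\binom{n}{k-j}$ and the constant $\alpha$: with probability $1-\expprob$ the number of such edges is at most $(1+\alpha)(1+\eps)d_L(\gen{i'-1})/\bigl(\binom{k}{j}-1\bigr)+2n^\delta$, hence the step-$i'$ pivot contribution to $d_L(\gen{i'})$ is at most $r_\ell' d_L(\gen{i'-1}) + 2c_\ell n^\delta$ (absorbing the factor $c_\ell$). Now one has to be slightly careful about which generation's size appears. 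Summing the bound over $i'$ telescopes badly if done naively, so instead I would bound $d_L(\gen{i'-1})$ using the inductive hypothesis $\Delta_\ell(\gen{i'-1})\le C_\ell(|\gen{i'-1}|n^{-\ell}+n^\delta)$, and then control $|\gen{i'-1}|$ in terms of $|\gen{i}|$. This is where Lemma~\ref{lem:stayBig} enters: for generations of size at least $n$, successive generation sizes are non-decreasing whp, so $|\gen{i'-1}|\le |\gen{i}|$ whenever the generations are large; the small generations (size $<n$) contribute only $O(n^{1-\ell})=O(n^\delta)$ to each degree and can be swept into the additive term. Putting $i'=i$ as the ``last'' step and observing that the pivots at step $i$ already contribute $r_\ell' d_L(\gen{i-1}) + 2c_\ell n^\delta \le r_\ell' C_\ell(|\gen{i}|n^{-\ell}+n^\delta)+2c_\ell n^\delta + (\text{lower order})$ gives exactly the claimed bound $(r_\ell' C_\ell + 2c_\ell+1)(|\gen{i}|n^{-\ell}+n^\delta)$, with the ``$+1$'' absorbing the lower-order slack.

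The main obstacle I anticipate is handling the \emph{accumulation of pivots across many rounds} cleanly: a pivot at $L$ in an early generation may create several $j$-sets through $L$, each of which can pivot again, so the contribution is genuinely recursive and the factor $r_\ell'<1$ is what prevents a geometric blow-up. Making this precise requires either (a) a careful single induction on $i$ in which the inductive hypothesis about $d_L(\gen{i-1})$ feeds into the step-$i$ estimate -- the cleanest route, but one must verify the error probabilities only multiply up by a factor of $\stoptime\le\mathrm{poly}(n)$, still leaving $\expprob$ after a union bound -- or (b) tracking the ``pivot subtree'' $\brpiv$ directly as a branching-type process with offspring mean $r_\ell'<1$ and bounding its total size. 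I would go with (a), and the delicate points will be: ensuring the conditioning is set up so that the step-$i'$ binomial domination is valid (the process must reveal, at the start of round $i'$, which $j$-sets of $\gen{i'-1}$ contain $L$ and how many unqueried $k$-sets lie above them); checking that the union bound over the at most $\binom{n}{\ell}$ choices of $L$ and at most $\stoptime$ rounds keeps the failure probability at $\expprob$ (using $\log n = o(n^{\delta/2})$); and correctly separating the ``large generation'' regime, where Lemma~\ref{lem:stayBig} gives monotonicity, from the ``small generation'' regime, where the trivial bound $d_L \le |\gen{i'-1}| < n$ already suffices.
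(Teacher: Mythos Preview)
Your core argument is the paper's: bound the number of potential pivot-queries from $\gen{i-1}$ by $d_L(\gen{i-1})\binom{n}{k-j}$, apply Lemma~\ref{lem:Chernoff} with constant $\alpha$, multiply by $c_\ell$, invoke the inductive hypothesis $d_L(\gen{i-1})\le C_\ell\bigl(\gensize{i-1}/n^\ell+n^\delta\bigr)$, and finally use Lemma~\ref{lem:stayBig} (or the trivial bound when $\gensize{i-1}<n$) to replace $\gensize{i-1}$ by $\gensize{i}$. That is exactly what the paper does, in about ten lines.

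Where you go astray is a misreading of the quantity being bounded. By Definition~\ref{def:jumppiv}, $\contrPivots{L}{i}$ is only the contribution to $d_L(\gen{i})$ from pivots \emph{in step $i$}, i.e.\ from edges discovered while querying $j$-sets of $\gen{i-1}$ that contain $L$. It is not an accumulated quantity over all rounds $i'\le i$. So there is no sum $\sum_{i'<i} d_L(\gen{i'})$ to control, no telescoping issue, and no ``accumulation of pivots across many rounds'' to worry about: the recursion you fear is already absorbed into the induction on $i$ via the single use of the hypothesis on $d_L(\gen{i-1})$. Your option~(b), tracking a pivot subtree $\brpiv$, is precisely the device the paper uses in the \emph{appendix} for Lemma~\ref{lemma:bdddeg} (where one really does need a total-size bound), but it is unnecessary here. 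Finally, the union bound over all $\ell$, $L$ and $i$ is carried out in the proof of Theorem~\ref{thm:bddgendeg} that wraps the two propositions; Proposition~\ref{prop:pivots} itself is stated and proved for fixed $\ell$, $L$, $i$.
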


\noindent \textbf{Proof of Theorem~\ref{thm:bddgendeg}.}
Now assuming Propositions~\ref{prop:jumps} and~\ref{prop:pivots} hold, we note that
for any $0\le \ell \le j-1$, any $\ell$-set $L$ and any $2\le i \le \stoptime$, assuming the conclusions of Propositions~\ref{prop:jumps}
and~\ref{prop:pivots} hold, we have
$$
\deggen{L}{i} = \contrJumps{L}{i}+ \contrPivots{L}{i} \le (r_\ell' C_\ell + 2\cconstl{\ell} + 1 + C_\ell')\left( \frac{\gensize{i}}{n^\ell}+n^\delta \right)
$$
and furthermore
\begin{align*}
r_\ell' C_\ell + 2\cconstl{\ell} + 1 + C_\ell' = r_\ell' C_\ell + (1-r_\ell')C_\ell \stackrel{\eqref{eq:recursion2}}{=} C_\ell.
\end{align*}
Taking a union bound over all choices of $\ell$, $L$ and $i$ (of which there are certainly at most $jn^{2j}$, and observing that $jn^{2j}\expprob = \expprob$, this completes the proof of Theorem~\ref{thm:bddgendeg}.\qed

\subsection{Jumps: proof of Proposition~\ref{prop:jumps}}

Fix some $\ell$, $L$ and $i$ and let $m_2'=m_2'(L,i)$ denote the number of queries to be made in generation $i$ that would result
in jumps to $L$ if an edge is found. Given a $j$-set $J$, the number of $k$-sets
containing $L$ and $J$ is $0$ if $|J\cap L|<w_0$, and at most $\binom{n}{k-j-\ell+w}$
if $|J\cap L|=w\ge w_0$. Thus we consider the number of $j$-sets in $\gen{i-1}$
which intersect $L$ in $w$ vertices with $w_0\le w\le \ell-1$. This is at most $\binom{\ell}{w} \Delta_w(\gen{i-1})$. Consequently,  we have
\begin{align*}
m_2' & \le \sum_{w=w_0}^{\ell-1} \binom{\ell}{w}\Delta_w(\gen{i-1})\binom{n}{k-j-\ell +w}\\
& \le  \sum_{w=w_0}^{\ell-1} \binom{\ell}{w}C_w\left(\frac{\gensize{i-1}}{n^w}+n^\delta\right)\frac{n^{k-j-\ell+w}}{(k-j-\ell +w)!}=:m_2,
\end{align*}
with probability at least $1-\expprob$, where we have used the induction hypothesis for the second inequality.

The number of such edges we discover is dominated by $\Bi(m_2,p)$, and so by Lemma~\ref{lem:Chernoff}, with probability $1-\expprob$ is at most
$$
(1+\alpha)m_2 p + 2n^\delta.
$$
Furthermore, each such edge results in at most $\binom{k-\ell}{j-\ell}$ new $j$-sets containing $L$ becoming active. Thus we have
\begin{align*}
& \frac{\contrJumps{L}{i}}{\binom{k-\ell}{j-\ell}} \le (1+\alpha)\frac{1+\eps}{\cconst \binom{n}{k-j}} \sum_{w=w_0}^{\ell-1}\binom{\ell}{w}C_w\left(\frac{\gensize{i-1}}{n^w}+n^\delta\right)\frac{n^{k-j-\ell+w}}{(k-j-\ell +w)!} + 2n^\delta\\
& = \frac{(1+\alpha)(1+\eps)(k-j)!}{\cconst(1-O(1/n))} \sum_{w=w_0}^{\ell-1}\binom{\ell}{w}\frac{C_w}{(k-j-\ell+w)!}\left(\frac{\gensize{i-1}}{n^\ell}+n^{-\ell+w+\delta}\right) + 2n^\delta.
\end{align*}
We now observe that since $w\le \ell-1$, the term in $n^{-\ell+w+\delta}$ is always $O(n^{-1+\delta})$, and therefore we may absorb all such terms into the $n^{\delta}$ term by increasing the constant slightly.

We would also like to replace $\gensize{i-1}$ by $\gensize{i}$. This is certainly possible for an upper bound if $\gensize{i-1}\ge n$ by Lemma~\ref{lem:stayBig}. However, if $\gensize{i-1}\le n$, we simply observe that $\frac{\gensize{i-1}}{n^\ell} \le n^{1-\ell}=o(n^\delta)$ (because we have $\ell \ge 1$), and in this case we have $\frac{\contrJumps{L}{i}}{\binom{k-\ell}{j-\ell}} \le 3n^\delta$.

Thus in either case we have
\begin{align*}
\frac{\contrJumps{L}{i}}{\binom{k-\ell}{j-\ell}} & \le \frac{(1+\alpha)(1+\eps)(k-j)!}{\cconst} \sum_{w=w_0}^{\ell-1}\binom{\ell}{w}\frac{C_w}{(k-j-\ell+1)!}\left(\frac{\gensize{i}}{n^\ell}\right) + 3n^\delta\\
& \stackrel{\eqref{eq:recursion}}{\le} \frac{C_\ell'}{\binom{k-\ell}{j-\ell}}\left(\frac{\gensize{i}}{n^\ell}+n^{\delta}\right)
\end{align*}
as claimed.\qed

\subsection{Pivots: proof of Proposition~\ref{prop:pivots}}

Fix some $\ell$, $L$ and $i$ and let $m_3'=m_3'(L,i)$ denote the number of queries to be made in generation $i$ that would result in pivots at $L$ if an edge is found. The number of $j$-sets in $\gen{i-1}$ which could lead to a pivot is at most $\Delta_\ell(\gen{i-1})$ and causes at most $\binom{n}{k-j}$ many queries. Thus, with probability at least $1-\expprob$, we obtain 
\begin{align*}
m_3' & \le \Delta_\ell(\gen{i-1})\binom{n}{k-j}\le C_\ell\left(\frac{\gensize{i-1}}{n^\ell}+n^\delta\right)\binom{n}{k-j}=:m_3,
\end{align*}
  where we have used the induction hypothesis for the second inequality. 

The number of such edges we discover is dominated by $\Bi(m_3,p)$, and so by Lemma~\ref{lem:Chernoff}, with probability $1-\expprob$ is at most
$$
(1+\alpha)m_3 p + 2n^\delta.
$$
Furthermore, each such edge results in at most $\binom{k-\ell}{j-\ell}-1=\cconstl{\ell}$ new $j$-sets containing $L$ becoming active (the one from which we are currently querying is already active). Thus we have
\begin{align*}
\frac{\contrPivots{L}{i}}{\cconstl{\ell}} & \le (1+\alpha)\frac{1+\eps}{\cconst \binom{n}{k-j}} C_\ell\left(\frac{\gensize{i-1}}{n^\ell}+n^\delta\right)\binom{n}{k-j} + 2n^\delta\\
\contrPivots{L}{i} & \le (1+O(1/n))r_\ell'C_\ell \frac{\gensize{i-1}}{n^\ell} + (r_\ell'C_\ell + 2\cconstl{\ell})n^\delta,
\end{align*}
where we recall that $r_\ell' = (1+\alpha)(1+\eps)\cconstl{\ell}/\cconst$.

As before, if $\gensize{i-1}\ge n$, we have $\gensize{i-1}\le \gensize{i}$ by Lemma~\ref{lem:stayBig}, while if $\gensize{i-1}\le n$, we have $\frac{\gensize{i-1}}{n^\ell}=o(n^\delta)$.
In either case we have
\begin{align*}
\contrPivots{L}{i} & \le (1+O(1/n))r_\ell'C_\ell \frac{\gensize{i}}{n^\ell} + (r_\ell'C_\ell + 2\cconstl{\ell}+1)n^\delta\\
& \le \left(r_\ell'C_\ell +2\cconstl{\ell}+1\right) \left(\frac{\gensize{i}}{n^\ell} + n^\delta \right)
\end{align*}
as claimed.\qed

\section{Concluding remarks}\label{sec:concluding}

There are several topics which have been studied extensively for random graphs but remain open questions
in random hypergraphs, particularly for $j\ge 2$.

\subsection{Critical window}
With our improvements on Theorem~\ref{thm:main} compared to~\cite{CKKgiant}, we have come
one step closer to determining the width of the critical window for the emergence of a
unique largest $j$-component in $\cH^k(n,p)$. 
However, the lower bound on $\eps$ given by $\eps^2n^{1-\delta}\to\infty$ is probably still not
best possible for $j\ge 2$. We conjecture the following:
\begin{conjecture}\label{con:criticalWindow}
Theorem~\ref{thm:main} holds for all $\eps=\eps(n)$ satisfying $\eps\to 0$ and $\eps^3n^j\to\infty$.
Furthermore, if $p=(1+cn^{-j/3})\hat{p}_\mathrm{g}$ for some fixed $c\in\RR$, then whp all $j$-components
are of size $O(n^{2j/3})$ and there is more than one $j$-component of $\cH^k(n,p)$ of size $\Theta(n^{2j/3})$.
\end{conjecture}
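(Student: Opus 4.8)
\noindent\emph{Proof strategy.}
The plan is to establish both halves of the conjecture through a single refinement of the analysis of $\ubfs$, in which the generation-by-generation bookkeeping is replaced by a one-dimensional \emph{exploration walk}. Process the $j$-sets of the $j$-component $C(J_1)$ of $J_1$ one at a time (a depth-first order is convenient) and let $S_t$ be the number of active $j$-sets once $t$ of them have been processed, so that $|C(J_1)|$ is the first time $t$ with $S_t=0$. By Lemma~\ref{lemma:bdddeg} together with a critical-window strengthening of Theorem~\ref{thm:bddgendeg} (discussed below), $S_t$ is sandwiched between random walks started at $S_0=1$ whose increments have mean $\Theta(\eps)$ and variance $\sigma^2=\sigma^2(k,j)=\Theta(1)$, as long as only $o(n^j)$ $j$-sets have been discovered. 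Everything then reduces to fluctuation theory for such walks.

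For the first half --- removing the hypothesis $\eps^2 n^{1-\delta}\to\infty$ --- observe that this hypothesis enters the present proof only through Lemma~\ref{lem:stayBig} (which needs $\eps^2|\gen{i}|\to\infty$) and through the polynomial error term $n^\delta$ in Theorem~\ref{thm:bddgendeg} and Lemma~\ref{lem:Chernoff}. I would first replace $n^\delta$ everywhere by a sub-polynomial function $\omega_0=\omega_0(n)$ with $\omega_0/\log n\to\infty$ and $\omega_0=n^{o(1)}$ (for instance $\omega_0=\exp(\sqrt{\log n})$): the recursions of Section~\ref{sec:mainproof} are unaffected, Lemma~\ref{lem:Chernoff} now gives error probability $\exp(-\Theta(\omega_0))$, and the union bound over the $\le jn^{2j}$ relevant events still closes since $\omega_0\gg\log n$. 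I would then discard Lemma~\ref{lem:stayBig} in favour of a walk statement: if $J_1$ lies in a component destined to be large, then $S_t>0$ for all $t\le\omega_1\eps^{-2}$ with probability $1-\exp(-\Theta(\omega_1))$ for a suitable $\omega_1\to\infty$; this needs only $\eps^2 n^j\to\infty$ (equivalently $\eps^{-2}=o(n^j)$, keeping the branching approximation valid throughout), and once $t\gg\eps^{-2}$ the drift $\eps t$ dominates the $O(\sqrt t)$ fluctuations, so $S_t$ grows linearly and the partial component reaches size $\lcompconst n^j$, invoking~\eqref{primaryStopCond2}. After these changes the only surviving constraint on the auxiliary function is $n^{-j/3}\ll\lcompconst\ll\eps$, which is satisfiable exactly when $\eps^3 n^j\to\infty$, and the second-moment and sprinkling steps of Section~\ref{app:motivation} carry over unchanged.

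For the second half, in the window $\eps\asymp n^{-j/3}$ every component encountered has size $O(n^{2j/3})=o(n^j)$, so the exploration never depletes a constant fraction of the $\binom{n}{j}$ $j$-sets, and the critical-window form of Theorem~\ref{thm:bddgendeg} (error $n^{o(1)}$, applied to whole components instead of single generations) keeps $S_t$ within relative error $o(1)$ of the mean-$\eps$, variance-$\sigma^2$ random walk for the entire exploration. Standard fluctuation estimates for such walks then give, uniformly over $j$-sets $J$ and over $\log^2 n\ll s\le\tfrac12\binom{n}{j}$,
\[
\PP\bigl(|C(J)|\ge s\bigr)=\Theta\!\left(s^{-1/2}\,e^{-\Theta(\eps^2 s)}\right).
\]
For the upper bound, set $N_A:=\#\{J:|C(J)|\ge An^{2j/3}\}$; then $\EE N_A=\binom{n}{j}\,\Theta\bigl(A^{-1/2}n^{-j/3}e^{-\Theta(A)}\bigr)=\Theta\bigl(A^{-1/2}n^{2j/3}e^{-\Theta(A)}\bigr)$, and since a $j$-component of size $\ge An^{2j/3}$ forces $N_A\ge An^{2j/3}$, Markov's inequality gives $\PP(\exists\,j\text{-component of size }\ge An^{2j/3})=O\bigl(A^{-3/2}e^{-\Theta(A)}\bigr)\to 0$ as $A\to\infty$; hence whp all $j$-components have size $O(n^{2j/3})$ (precisely, for every $\omega(n)\to\infty$ none exceeds $\omega(n)n^{2j/3}$). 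For the multiplicity, a second-moment computation on $N_\delta:=\#\{J:|C(J)|\ge\delta n^{2j/3}\}$ --- with the cross term estimated by exploring from a second $j$-set after deleting the (small) first component --- shows that $\EE N_\delta=(c_\delta+o(1))n^{2j/3}$ with $c_\delta=\Theta(\delta^{-1/2})\to\infty$ as $\delta\to 0$, and that $N_\delta$ is concentrated enough that $N_\delta\ge\tfrac12 c_\delta n^{2j/3}$ with probability $1-o_\delta(1)$, where $o_\delta(1)\to 0$ as $\delta\to 0$. Given $\eta>0$, choose $A$ so large that $\PP(\exists\,j\text{-component of size }\ge An^{2j/3})<\eta/3$, and then $\delta$ so small that $c_\delta>4A$ and $o_\delta(1)<\eta/3$; on the resulting event of probability at least $1-\eta$ the largest $j$-component has size at most $An^{2j/3}<\tfrac12 c_\delta n^{2j/3}\le N_\delta$, which is impossible unless at least two $j$-components have size $\ge\delta n^{2j/3}$. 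Thus there is more than one $j$-component of size $\Theta(n^{2j/3})$.

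The hard part will be the two analytic inputs that this bookkeeping takes for granted. The first is the fluctuation estimate for $\PP(|C(J)|\ge s)$, uniform across the window, which requires the branching approximation to be accurate to within $o(\eps)=o(n^{-j/3})$ per step over $\Theta(n^{2j/3})$ steps --- that is, to within $o(1)$ \emph{total} relative error. For $j=1$ this is the classical near-critical analysis of $G(n,1/n)$, but for $j\ge 2$ the genuinely new difficulty is that two explored $j$-sets sharing $\ell$ vertices with $1\le\ell\le j-1$ spawn $\Theta(n^{k-2j+\ell})$ common $k$-sets, and hence correlated queries; showing that the total number of such overlapping queries is $o(n^{k-j/3})$, and so negligible against the $\Theta(n^{k-j/3})$ genuine ones, is exactly what forces the critical-window, whole-component version of the $\ell$-set degree bound of Theorem~\ref{thm:bddgendeg} with error reduced from $n^\delta$ to $n^{o(1)}$. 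Establishing that strengthened degree bound --- essentially re-running Section~\ref{sec:mainproof} with $n^\delta$ replaced by $n^{o(1)}$ and $\gen{i}$ by $\compone(i)$, but now without Lemma~\ref{lem:stayBig} to lean on --- and then feeding it cleanly into the fluctuation machinery is where I expect essentially all the work to lie; the first- and second-moment bookkeeping, though lengthy, is routine once these inputs are in place.
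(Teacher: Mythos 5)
There is no proof of this statement to compare against: in the paper it is stated as Conjecture~\ref{con:criticalWindow}, i.e.\ an open problem, and what you have written is a research programme rather than a proof. You say so yourself at the end (``where I expect essentially all the work to lie''), and the two inputs you defer --- a whole-component version of Theorem~\ref{thm:bddgendeg} with additive error $n^{o(1)}$ valid down to $\eps\asymp n^{-j/3}$, and the uniform tail estimate $\PP(|C(J)|\ge s)=\Theta(s^{-1/2}e^{-\Theta(\eps^2 s)})$ for $j\ge 2$ --- are precisely the content of the conjecture, not routine bookkeeping around it.

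Beyond that, one of your concrete reductions is wrong as stated. You claim the hypothesis $\eps^2 n^{1-\delta}\to\infty$ enters only through Lemma~\ref{lem:stayBig} and through error \emph{probabilities}, so that replacing $n^\delta$ by $\omega_0=\exp(\sqrt{\log n})$ leaves ``$n^{-j/3}\ll\lcompconst\ll\eps$'' as the only surviving constraint. But the constraint $\lcompconst\gg n^{-1/2+\delta/2}$ is not about error probabilities: it is what makes the \emph{additive} error term in the degree bound~\eqref{eq:bddgendeg} (and the term $\xi$ in Corollary~\ref{cor:bddstopgendeg}) dominated by the main term $|\gen{\stoptime}|n^{-\ell}\approx\lcompconst^2 n^{j-\ell}$ at the stopping generation, which is what the sprinkling/second-moment step consumes. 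Shrinking $n^\delta$ to $n^{o(1)}$ only relaxes this to $\lcompconst\gg n^{-1/2+o(1)}$, and for $j\ge 2$ with $\eps$ close to $n^{-j/3}\le n^{-2/3}$ no admissible $\lcompconst$ exists: for $\ell=j-1$ the main term $\lcompconst^2 n$ is then $o(1)$, the degree bound degenerates to its additive error, and the count of unqueried $k$-sets in the sprinkling argument is no longer under control. So even the first half of the conjecture does not follow from the proposed cosmetic changes; it requires a genuinely stronger degree estimate (or a different way to handle overlapping queries), which is exactly why the authors leave it open. The second half additionally needs the full near-critical fluctuation theory of the exploration walk for $j\ge 2$, for which your sketch supplies no mechanism beyond the analogy with $G(n,p)$.
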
 

Note that if $\eps^3 n^j$ is constant, the bounds on the size of the largest component from the super-critical case
($\Theta (\eps n^j)$) and the sub-critical case ($O(\eps^{-2} \log n)$) match up to the $\log n$ term,
suggesting that we have a smooth transition. In particular, this condition is also sufficient for the
sprinkling argument in Section~\ref{app:motivation} to work.

\subsection{Asymptotic normality of the giant}
In the supercritical case, it would be interesting to determine the asymptotic distribution of the size
of the giant component more precisely, as was done for the case $j=1$ in~\cite{BCOK14,BR12}, where
the size of the giant component was shown to tend to a normal distribution.

\subsection{Nullity}
One can also consider the structure of the components in the subcritical, critical, or supercritical regimes.
For graphs it is well-known that whp all components in the subcritical regime, and whp all except the giant
in the supercritical regime, are either trees or contain at most one cycle. Thus we aim to generalise
the notion of a tree to $j$-connectedness. One possibility is via the nullity (with respect to $j$-connectedness)
of a $k$-uniform hypergraph $\cH=(V,E)$, which we define to be
$$
\nu=\nu(\cH;j,k):=|\cC|+\cconst|E|-\left|\binom{V}{j}\right|,
$$ 
where $\cC$ denotes the set of $j$-components of $\cH$, and we recall that $\cconst=\binom{k}{j}-1$. For a collection $\cJ\subset \binom{V}{j}$ and an edge set $E\subset \binom{V}{k}$ we define the pair $(\cJ,E)$ to be a \emph{(hyper-)tree} if $\cJ$ is a $j$-connected component in $\cH$ and $\nu=0$.\footnote{Note that this is not the only reasonable definition of a tree in hypergraphs.}

In contrast to graphs, a $j$-component does not necessarily contain a spanning tree, but
in~\cite{CKKgiant} it was asked what the minimal nullity of a spanning structure
in the giant $j$-component after the phase transition is. We might also ask about the nullities
of other components in the subcritical and supercritical regimes, and whether these are
small whp, as is the case for graphs.

An approach based on nullities may also help to analyse components within the critical regime,
as was done for graphs in e.g.\ ~\cite{ABBG12,Aldous97,JKLP93,Luczak90}.

\ 

\bibliographystyle{amsplain}
\bibliography{Bibliography}

\

\appendix

\section{Lower coupling}\label{app:bdddegProof}
Our aim in this section is to prove Lemma~\ref{lemma:bdddeg}.
We first prove that with very high probability
\begin{equation}\label{eq:notTooBig}
|C_{J_1}({\largetime})| \le 3\lambda n^j.
\end{equation}
For by the definition of $\largetime$ we have
$\gensize{\largetime-1}\le|C_{J_1}(\largetime-1)| < \lambda n^j$,
and therefore the number of queries that we make while exploring generation
$\largetime$ is at most
$$
\lambda n^j \binom{n}{k-j}.
$$
Therefore the expected number of edges we discover while exploring generation $\stoptime$
is at most $\lambda n^j\binom{n}{k-j} p =\frac{1+\eps}{\cconst}\lambda n^j$,
and by a Chernoff bound, with
probability at least $1-\expprob$ the number of edges we discover is at most
$\frac{2\lambda n^j}{\cconst}$. Each such edge gives rise to at most $\cconst$ new $j$-sets
and therefore with probability at least $1-\expprob$ we have
$\gensize{\largetime} \le 2\lambda n^j$.
Thus
we obtain $|C_{J_1}(\largetime)|< \lambda n^j + 2\lambda n^j = 3\lambda n^j$, as claimed.

\begin{proof}[Proof of Lemma~\ref{lemma:bdddeg}]
We now prove the first assertion of Lemma~\ref{lemma:bdddeg} by induction on $\ell$. Note that by monotonicity we may assume $i=\largetime$. The base case $\ell=0$ follows immediately from~\eqref{eq:notTooBig},
so assume $\ell \ge 1$ and that $\Delta_{\ell'}(C_{J_1}(\largetime))\le S_{\ell'}\lambda n^{j-i}$
for each $0\le \ell' \le \ell-1$ and some constants $S_0,\ldots,S_{\ell-1}$.

Fix an $\ell$-set $L$.
Let us consider how the degree of $L$ in $C_{J_1}(\largetime)$ might grow.
We bound the contribution from jumps and pivots separately.

We first consider how many queries we may make from a $j$-set $J \in C_{J_1}(\largetime)$
not containing $L$ to a $k$-set containing $L$, i.e.\ the number of queries
which might lead to a jump to $L$. Given $J$, the number of $k$-sets
containing $L$ and $J$ is at most $\binom{n}{k-j-\ell+w}$,
where $w=|J\cap L|$. Thus we consider the number of $j$-sets in $C_{J_1}(\largetime)$
which intersect $L$ in $w$ vertices. This is at most
$\binom{\ell}{w} \Delta_w(C_{J_1}(\largetime))$.
Thus the number of queries which might result in a jump to $L$ is
\begin{align*}
\sum_{w=w_0}^{\ell-1}\binom{\ell}{w}\Delta_w(C_{J_1}(\largetime))\binom{n}{k-j-\ell+w}
& \le \sum_{w=w_0}^{\ell-1}\binom{\ell}{w}S_w\lambda n^{k-\ell}\\
&\le 2^\ell\max_{w=w_0,\ldots,\ell-1}\{S_w\}\lambda n^{k-\ell}.
\end{align*}
By Lemma~\ref{lem:Chernoff} and the fact that $\lcompconst p=\omega(n^{j-k-1+\delta})$, the number of edges
we find in this way is at most $2p\lambda n^{k-\ell}2^{\ell}\max_{w=w_0,\ldots,\ell-1}S_w$
with probability at least $1-\expprob$.
Finally, each such edge contributes
at most $\binom{k-\ell}{j-\ell}\le 2^k$ to the degree of $L$, and so the contribution made by
jumps is at most
$$
2^{k+\ell+1}\max_{w=w_0,\ldots,\ell-1}\{S_w\}\lambda p n^{k-\ell}
\le 2^{k+\ell+1}\max_{w=w_0,\ldots,\ell-1}\{S_w\}\lambda n^{j-\ell}
$$
with probability at least $1-\expprob$.

On the other hand, let us consider the pivots at $L$ as forming a set of
\emph{pivot processes} -- if we discover a $j$-set $J'$ from a $j$-set $J$ via
a pivot at $L$, then $J$ and $J'$ both contain $L$ and are part of the same
pivot process.
Each $j$-set arising from a jump to $L$
(and possibly also $J_1$ if this contains $L$) gives rise to such a pivot process at $L$.
A pivot process is a search process on $j$-sets containing $L$ in a $k$-uniform hypergraph.
By removing $L$ from each of the $j$-sets, it becomes a search process on $(j-\ell)$-sets
in a $(k-\ell)$-uniform hypergraph. We note that the number of children in such a process
is dominated by a random variable $X^{(\text{pv})}$ with 
$$
\frac{X^{(\text{pv})}}{\binom{k-\ell}{j-\ell}-1 } \sim \Bi \left(\binom{n}{k-j},p\right)
$$
and we therefore define $\brpiv$ to be an abstract branching process on vertices
(which represent $j$-sets containing $L$)
in which the number of children of each vertex has this distribution.
The expected number of children is
$$
(1+\eps)\frac{\binom{k-\ell}{j-\ell}-1}{\binom{k}{j}-1}<1,
$$
where the inequality follows since $\ell \ge 1$ and $\eps =o(1)$.
In other words, $\brpiv$ is subcritical. We can then show that
with probability $1-\expprob$, if we start $x\ge n^\delta$ such processes,
then their combined total size is $O(x)$.

More precisely, consider a set of $x\ge n^\delta$ independent copies of
$\brpiv$. If $x-1$ is the maximum of $\binom{k-\ell}{j-\ell}$ times the
number of jumps to $L$ and $n^\delta$,
then the total number of vertices in these $x$ processes dominates the number
of pivots at $L$.

We imagine generating children via a sequence of Bernoulli
queries with success probability $p$, each success giving rise to
$\binom{k-\ell}{j-\ell}-1$ children.
In order for the processes to reach total size $Cx$ for some constant $C$,
we would need at least $\frac{(C-1)x}{\binom{k-\ell}{j-\ell}-1}$ of the first
$Cx\binom{n}{k-j}$ queries to be successful.
But the probability of this can be bounded by a Chernoff bound:
\begin{align*}
\PP\left(\Bi\left(Cx\binom{n}{k-j},p\right)\ge \frac{(C-1)x}{\binom{k-\ell}{j-\ell}-1}\right)
& \le \exp\left(-\frac{x^2\left(\frac{C-1}{\cconstl{\ell}}-Cp\binom{n}{k-j}\right)^2}{2\left(Cx\binom{n}{k-j}p+1/3\right)}\right)\\
& \le \exp\left(-x\frac{\left(\frac{C-1}{\cconstl{\ell}}-\frac{C(1+\eps)}{\cconst}\right)^2}{2\left(\frac{(1+\eps)C}{\cconst}+\frac{1}{3}\right)}\right)\\
& \le \exp\left(-\Theta\left(n^\delta\right)\right),
\end{align*}
where the last line follows for sufficiently large $C$ because $\cconstl{\ell}<\cconst$ and $\eps =o(1)$
and because $x\ge n^\delta$.

Now recall that with probability at least $1-\expprob$ the number of jumps to $L$ is  at most
$
\le 2^{k+\ell+1}\max_{w=w_0,\ldots,\ell-1}\{S_w\}\lambda n^{j-\ell},
$
therefore for sufficiently large $C$, the total contribution to the degree of $L$ made by pivots is at most
$$
C\left( 2^{k+\ell+1}\max_{w=w_0,\ldots,\ell-1}\{S_w\}\lambda n^{j-\ell}+1\right),
$$
since $\lcompconst n^{j-\ell}\ge \lcompconst n=\omega(n^{\delta})$. 

Consequently, the total number of $j$-sets containing $L$ is at most
$$
(C+1)\left( 2^{k+\ell+1}\max_{w=w_0,\ldots,\ell-1}\{S_w\}\lambda n^{j-\ell}+1\right)
\le (C+2)2^{k+\ell+1}\max_{w=w_0,\ldots,\ell-1}\{S_w\}\lambda n^{j-\ell}
$$
with probability at least $1-\expprob$.
We obtain the inductive step by setting $S_\ell:=(C+2)2^{k+\ell+1}\max_{w=w_0,\ldots,\ell-1}\{S_w\}$
and taking a union bound over all $\ell$-sets $L$ (since $\binom{n}{\ell}\expprob = \expprob$).

Finally, we prove that $\lbr$ is a lower coupling. For given a $j$-set $J\in \gen{i}$, we bound the number of $k$-sets $K$
containing another $j$-set $J' \in \compone (i)$, and which therefore would give fewer than $\cconst=\binom{k}{j}-1$ new $j$-sets.
We distinguish cases based on $\ell=|J\cap J'|$ and observe that the number of such $k$-sets is at most
$$
\sum_{\ell=\max\{0,2j-k\}}^{j-1} \Delta_\ell(\compone(i)) \binom{n}{k-2j+\ell} \le \sum_{\ell=0}^{j-1}o(\elbr n^{j-\ell})n^{k-2j+\ell}
 = o(\elbr n^{k-j}).
$$
Thus the number of $k$-sets that can be queried from $J$ and which contain no further discovered $j$-sets is
$$
\binom{n-j}{k-j}-o(\elbr n^{k-j}) = \left(1-O(\tfrac{1}{n})-o(\elbr)\right)\binom{n}{k-j} \ge \left(1-\elbr\right)\binom{n}{k-j}
$$
as required.
\end{proof}

\section{Survival probability}\label{app:survProb}

\begin{proof}[Proof of Claim~\ref{claim:survival}]
Let $\mathcal{A}$ denote the event that $\lbr\prec\ubfs$ for all $i\le \largetime$. We observe that $\PP(\mathcal{A})\ge 1-\expprob$ by Lemma~\ref{lemma:bdddeg}. Thus we have
\begin{align*}
\PP(\primaryStop)\ge \PP(|C_J({\largetime})|\ge \lcompconst n^j)
& \ge \PP(|C_J({\largetime})|\ge \lcompconst n^j \wedge \mathcal{A})\\
&\ge \PP(|\lbr|\ge \lcompconst n^j \wedge \mathcal{A})\\
&\ge \PP(\lbr \text{ survives}) -\PP(\neg\mathcal{A})\\
&\ge \PP(\lbr \text{ survives}) -\expprob.
\end{align*}

On the other hand, note that by some elementary calculation (see Appendix~A in~\cite{CKKgiant}) the following is holds:  if we condition on the process $\ubr$ dying out, we obtain a subcritical Galton-Watson branching process $\tilde{\mathcal{T}}$ where the number of children of each individual is distributed as a random variable $\tilde{X}$, where
$$
\frac{\tilde{X}}{\binom{k}{j}-1}\sim \Bi\left(\binom{n}{k-j},\tilde{p}\right)
$$
for some $\tilde{p}=\tilde{p}(n)$ satisfying $\left(\binom{k}{j}-1\right)\binom{n}{k-j}\tilde{p}=1-\eps\pm o(\eps)$. Furthermore, we have $\EE(|\tilde{\mathcal{T}}|)=(1\pm o(1))\eps^{-1}$ and thus we obtain the upper bound
\begin{align*}
\PP(\primaryStop)&\le \PP(\ubr \text{ survives})+\PP\left(|\ubr|\ge \lcompconst n^j\cond \ubr \text{ dies out}\right)\\
&\le \PP(\ubr \text{ survives}) +(1\pm o(1))(\eps\lcompconst n^j)^{-1},
\end{align*}
by Markov's Inequality. Note that the last term is $o(\eps)$.

We therefore need to calculate the survival probabilities of $\lbr$ and $\ubr$. We treat both cases in parallel by setting $\br:=\lbr$ or $\br:=\ubr$, $\zeta:= 0$ or $\zeta:=\elbr$ and
$\eps':=\eps-\zeta-\eps\zeta = (1-\zeta)(1+\eps)-1$.

It is slightly
more convenient to consider the event $\death$ of the process $\br$ dying out and calculate its probability $\PP(\death)$. The process dies out
if every subprocess starting at a child of the root also dies out. Recall that $\cconst:=\binom{k}{j}-1$.
Because of the
recursive nature of the tree, we have
\begin{align*}
\PP(\death) & =\sum_{i=0}^\infty \PP\left(\Bi\left((1-\zeta)\binom{n}{k-j},p\right)=i\right)\PP(\death)^{\left(\binom{k}{j}-1\right)i}\\
 & =\sum_{i=0}^\infty \binom{(1-\zeta)\binom{n}{k-j}}{i}\PP(\death)^{i\cconst}(1-p)^{(1-\zeta)\binom{n}{k-j}-i}\\
 &=\left(p\PP(\death)^\cconst+1-p\right)^{(1-\zeta)\binom{n}{k-j}}=\left(1-p(1-\PP(\death)^{\cconst})\right)^{(1-\zeta)\binom{n}{k-j}}.
\end{align*}
We set $x:=1-\PP(\death)^{\cconst}$ and $y:=\left((1-\zeta) \cconst\binom{n}{k-j}\right)^{-1}$ and note that $p=(1+\eps)\hat{p}_{\mathrm{g}}=(1+\eps)\cconst^{-1}\binom{n}{k-j}^{-1}=(1+\eps')y$. Hence, we obtain
\begin{align*}
1-x=\left(1-(1+\eps')yx\right)^{1/y},
\end{align*}
and furthermore solving for $\eps'$ yields
\begin{align*}
\eps'=\frac{1-xy-(1-x)^y}{xy}=\frac{ \frac{y(1-y)}{2}x^2 + \frac{y(1-y)(2-y)}{6}x^3 + \ldots}{xy}
\end{align*}
implying 
\begin{align*}
\eps'=\frac{x}{2}+O(x^2).
\end{align*}
In other words, since $\eps'=(1\pm o(1))\eps$, we have 
\begin{align*}
1-\PP(\death)=1-(1-x)^{1/\cconst}=(1\pm o(1))\frac{2\eps}{\cconst},
\end{align*}
as claimed.
\end{proof}

\end{document}